\newcommand{\db}[2]{#1\!\leftrightarrow\!#2}
\newcommand{\ru}[1]{\textbf{R#1}}
\newtheorem{theorem}{Theorem}
\newtheorem{definition}[theorem]{Definition}
\newtheorem{proposition}[theorem]{Proposition}
\crefname{proposition}{Proposition}{Propositions}
\newtheorem{lemma}[theorem]{Lemma}
\newtheorem{conjecture}[theorem]{Conjecture}
\crefname{claim}{claim}{claims}
\Crefname{claim}{Claim}{Claims}
\DeclareMathOperator{\ad}{ad}
\DeclareMathOperator{\mad}{mad}
\title{$2$-distance list $(\Delta+2)$-coloring of planar graphs with girth at least 10}
\author[1]{Hoang La\thanks{xuan-hoang.la@lirmm.fr}}
\author[1]{Mickael Montassier\thanks{mickael.montassier@lirmm.fr}}
\affil[1]{LIRMM, Université de Montpellier, CNRS, Montpellier, France}
\begin{document}
  \maketitle

\begin{abstract}
Given a graph $G$ and a list assignment $L(v)$ for each vertex of $v$ of $G$. A proper $L$-list-coloring of $G$ is a function that maps every vertex to a color in $L(v)$ such that no pair of adjacent vertices have the same color. We say that a graph is list $k$-colorable when every vertex $v$ has a list of colors of size at least $k$.
A $2$-distance coloring is a coloring where vertices at distance at most 2 cannot share the same color. We prove the existence of a $2$-distance list ($\Delta+2$)-coloring for planar graphs with girth at least $10$ and maximum degree $\Delta\geq 4$.
\end{abstract}

\section{Introduction}

A \emph{$k$-coloring} of the vertices of a graph $G=(V,E)$ is a map $\phi:V \rightarrow\{1,2,\dots,k\}$. A $k$-coloring $\phi$ is a \emph{proper coloring}, if and only if, for all edge $xy\in E,\phi(x)\neq\phi(y)$. In other words, no two adjacent vertices share the same color. The \emph{chromatic number} of $G$, denoted by $\chi(G)$, is the smallest integer $k$ such that $G$ has a proper $k$-coloring.  A generalization of $k$-coloring is $k$-list-coloring.
A graph $G$ is {\em $L$-list colorable} if for a
given list assignment $L=\{L(v): v\in V(G)\}$ there is a proper
coloring $\phi$ of $G$ such that for all $v \in V(G), \phi(v)\in
L(v)$. If $G$ is $L$-list colorable for every list assignment $L$ with $|L(v)|\ge k$ for all $v\in V(G)$, then $G$ is said to be {\em $k$-choosable} or \emph{$k$-list-colorable}. The \emph{list chromatic number} of a graph $G$ is the smallest integer $k$ such that $G$ is $k$-choosable. List coloring can be very different from usual coloring as there exist graphs with a small chromatic number and an arbitrarily large list chromatic number.

In 1969, Kramer and Kramer introduced the notion of 2-distance coloring \cite{kramer2,kramer1}. This notion generalizes the ``proper'' constraint (that does not allow two adjacent vertices to have the same color) in the following way: a \emph{$2$-distance $k$-coloring} is such that no pair of vertices at distance at most 2 have the same color. The \emph{$2$-distance chromatic number} of $G$, denoted by $\chi^2(G)$, is the smallest integer $k$ such that $G$ has a 2-distance $k$-coloring. Similarly to proper $k$-list-coloring, one can also define \emph{$2$-distance $k$-list-coloring} We denote $\chi^2_l(G)$ the \emph{$2$-distance list chromatic number} of $G$. 

For all $v\in V$, we denote $d_G(v)$ the degree of $v$ in $G$ and by $\Delta(G) = \max_{v\in V}d_G(v)$ the maximum degree of a graph $G$. For brevity, when it is clear from the context, we will use $\Delta$ (resp. $d(v)$) instead of $\Delta(G)$ (resp. $d_G(v)$). 
One can observe that, for any graph $G$, $\Delta+1\leq\chi^2(G)\leq \Delta^2+1$. The lower bound is trivial since, in a 2-distance coloring, every neighbor of a vertex $v$ with degree $\Delta$, and $v$ itself must have a different color. As for the upper bound, a greedy algorithm shows that $\chi^2(G)\leq \Delta^2+1$. Moreover, that upper bound is tight for some graphs, for example, Moore graphs of type $(\Delta,2)$, which are graphs where all vertices have degree $\Delta$, are at distance at most two from each other, and the total number of vertices is $\Delta^2+1$. See \Cref{tight upper bound figure}.

\begin{figure}[htbp]
\begin{center}
\begin{subfigure}[t]{5cm}
\centering
\begin{tikzpicture}[every node/.style={circle,thick,draw,minimum size=1pt,inner sep=2}]
  \graph[clockwise, radius=1.5cm] {subgraph C_n [n=5,name=A] };
\end{tikzpicture}
\caption{The Moore graph of type (2,2):\\ the odd cycle $C_5$.}
\end{subfigure}
\qquad
\begin{subfigure}[t]{5cm}
\centering
\begin{tikzpicture}[every node/.style={circle,thick,draw,minimum size=1pt,inner sep=1}]
  \graph[clockwise, radius=1.5cm] {subgraph C_n [n=5,name=A] };
  \graph[clockwise, radius=0.75cm,n=5,name=B] {1/"6", 2/"7", 3/"8", 4/"9", 5/"10" };

  \foreach \i [evaluate={\j=int(mod(\i+6,5)+1)}]
     in {1,2,3,4,5}{
    \draw (A \i) -- (B \i);
    \draw (B \j) -- (B \i);
  }
\end{tikzpicture}
\caption{The Moore graph of type (3,2):\\ the Petersen graph.}
\end{subfigure}
\qquad
\begin{subfigure}[t]{5cm}
\centering
\includegraphics[scale=0.12]{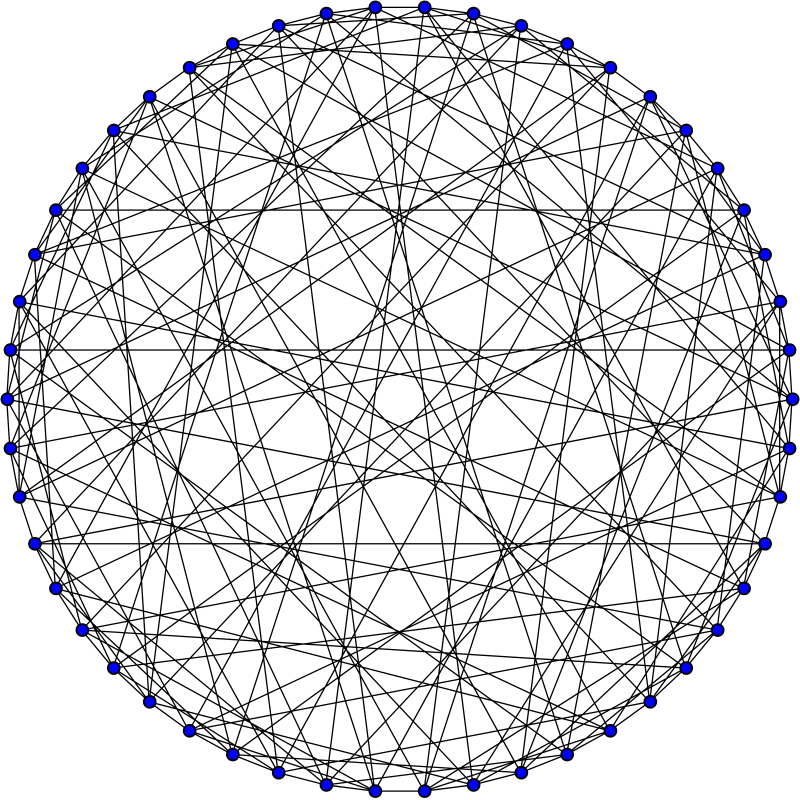}
\caption{The Moore graph of type (7,2):\\ the Hoffman-Singleton graph.}
\end{subfigure}
\caption{Examples of Moore graphs for which $\chi^2=\Delta^2+1$.}
\label{tight upper bound figure}
\end{center}
\end{figure}

By nature, $2$-distance list colorings and the $2$-distance list chromatic number of a graph depend a lot on the number of vertices in the neighborhood of every vertex. More precisely, the ``sparser'' a graph is, the lower its $2$-distance chromatic number will be. One way to quantify the sparsity of a graph is through its maximum average degree. The \emph{average degree} $\ad$ of a graph $G=(V,E)$ is defined by $\ad(G)=\frac{2|E|}{|V|}$. The \emph{maximum average degree} $\mad(G)$ is the maximum, over all subgraphs $H$ of $G$, of $\ad(H)$. Another way to measure the sparsity is through the girth, i.e. the length of a shortest cycle. We denote $g(G)$ the girth of $G$. Intuitively, the higher the girth of a graph is, the sparser it gets. These two measures can actually be linked directly in the case of planar graphs.

A graph is \emph{planar} if one can draw its vertices with points on the plane, and edges with curves intersecting only at its endpoints. When $G$ is a planar graph, Wegner conjectured in 1977 that  $\chi^2(G)$ becomes linear in $\Delta(G)$:

\begin{conjecture}[Wegner \cite{wegner}]
\label{conj:Wegner}
Let $G$ be a planar graph with maximum degree $\Delta$. Then,
$$
\chi^2(G) \leq \left\{
    \begin{array}{ll}
        7, & \mbox{if } \Delta\leq 3, \\
        \Delta + 5, & \mbox{if } 4\leq \Delta\leq 7,\\
        \left\lfloor\frac{3\Delta}{2}\right\rfloor + 1, & \mbox{if } \Delta\geq 8.
    \end{array}
\right.
$$
\end{conjecture}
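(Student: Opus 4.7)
Wegner's conjecture is famously open for $\Delta \geq 8$ (it has stood since 1977), so what follows is a plan of attack rather than a strategy I expect to complete. The natural framework is minimum counterexample plus discharging. Take $G$ a planar graph of maximum degree $\Delta$, minimum among violators of the stated bound, and fix a planar embedding. Every proper subgraph of $G$ then satisfies the bound, so any local subgraph $X$ for which a valid $2$-distance coloring of $G - X$ (or of a suitable minor) can always be extended back to $X$ is forbidden in $G$; call such $X$ \emph{reducible}. Assign initial charges $\omega(v) = d(v)-4$ to vertices and $\omega(f) = \ell(f)-4$ to faces, so that the total is $-8$ by Euler's formula, and design discharging rules that, in the absence of reducible configurations, leave every element with nonnegative charge --- a contradiction.

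The $\Delta \leq 3$ case (target $\chi^2(G) \leq 7$) is known, essentially due to Thomassen, and I would either cite it or reprove it by light discharging, using reducible configurations such as $2$-vertices and adjacent $3$-vertices sharing a common neighbor. For $4 \leq \Delta \leq 7$, where the target is $\Delta + 5$, I would run a finite case analysis per value of $\Delta$; the basic extendability statement is that a vertex $v$ is reducible whenever $|N^{\leq 2}(v) \setminus \{v\}| \leq \Delta + 4$, so the aim is to forbid local patterns where a low-degree vertex has a large second neighborhood clustered tightly around it. The discharging then shifts excess charge from high-degree vertices to their low-degree neighbors along carefully prescribed adjacency patterns.

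The hard regime, $\Delta \geq 8$, is where all attention concentrates. The best known general upper bound here is approximately $\lceil 5\Delta/3 \rceil + O(1)$, due to Havet, van den Heuvel, McDiarmid, and Reed via a probabilistic argument, so the task is to shave off the $\Delta/6$ gap down to the conjectured $\lfloor 3\Delta/2 \rfloor + 1$. A structural attack would enrich the reducibility catalogue: for instance, try to prove that a vertex $v$ with $d(v) \leq \lfloor \Delta/2 \rfloor$ having two neighbors of degree at most $\lfloor \Delta/2 \rfloor$ is reducible (because its $2$-neighborhood cannot exceed $\lfloor 3\Delta/2 \rfloor$ distinct vertices), and then design discharging rules forcing such vertices to exist in any planar graph of maximum degree $\Delta$.

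The main obstacle is precisely this reducibility step. Producing a catalogue of reducible configurations rich enough for a discharging argument to close the constant-factor gap is the substantive open problem at the heart of Wegner's conjecture; all published progress in the last two decades has been either probabilistic (and hence loses constants) or structural under girth assumptions --- in the spirit of the $\mathrm{girth} \geq 10$ result the present paper proves. Closing the gap in the general planar setting likely requires a genuinely new extension tool, perhaps a kernel-method argument on the square graph $G^2$, a potential-function bookkeeping of partial colorings, or a global averaging over $2$-neighborhoods that goes beyond Euler's formula alone.
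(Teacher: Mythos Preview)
The statement you were asked to prove is labelled \emph{Conjecture} in the paper, and the paper does not attempt to prove it. It is quoted only as background: the authors remark that the case $\Delta \leq 3$ has been settled by Thomassen and, independently, by Hartke, Jahanbekam, and Thomas, and that for $\Delta \geq 8$ the best known bound is $\frac{3}{2}\Delta(1+o(1))$ due to Havet, van den Heuvel, McDiarmid, and Reed. The conjecture remains open for all $\Delta \geq 4$, and the paper's actual contribution is the much more restricted Theorem~\ref{main theorem} on graphs with $\mad < \frac{5}{2}$, girth at least $10$, and $\Delta = 4$.

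You clearly recognise this --- your proposal is framed as a plan of attack rather than a proof, and you correctly identify the reducibility catalogue as the bottleneck. That assessment is accurate, but it is not a proof, and there is no proof in the paper to compare it against. If the intent of the exercise was to locate and reproduce the paper's argument for this statement, the right answer is simply that no such argument exists: Wegner's conjecture is stated, not proved.
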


The upper bound for the case where $\Delta\geq 8$ is tight (see \Cref{wegner figure}(i)). Recently, the case $\Delta\leq 3$ was proved by Thomassen \cite{tho18}, and by Hartke \textit{et al.} \cite{har16} independently. For $\Delta\geq 8$, Havet \textit{et al.} \cite{havet} proved that the bound is $\frac{3}{2}\Delta(1+o(1))$, where $o(1)$ is as $\Delta\rightarrow\infty$ (this bound holds for 2-distance list-colorings). \Cref{conj:Wegner} is known to be true for some subfamilies of planar graphs, for example $K_4$-minor free graphs \cite{lwz03}.

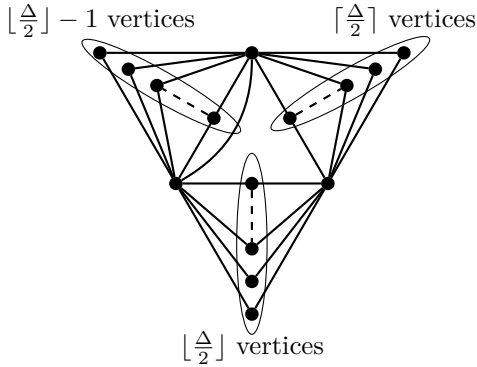
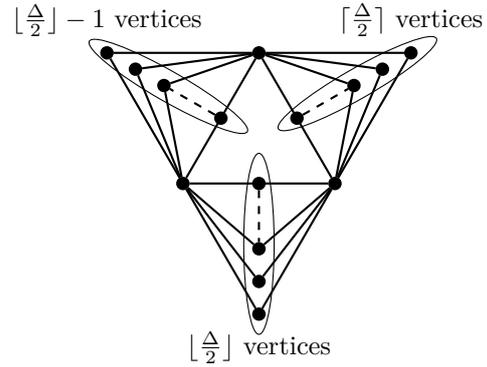
\begin{figure}[htbp]
\begin{subfigure}[b]{0.48\textwidth}
\centering
\begin{tikzpicture}[scale=0.4]
\begin{scope}[every node/.style={circle,thick,draw,minimum size=1pt,inner sep=1.5}]
    \node[fill] (y) at (0,0) {};
    \node[fill] (z) at (5,0) {};
    \node[fill] (x) at (2.5,4.33) {};

    \node[fill] (xy) at (1.25,2.165) {};
    \node[fill] (yz) at (2.5,0) {};
    \node[fill] (zx) at (3.75,2.165) {};

    \node[fill,label={[label distance=-1cm]above:$\lfloor\frac{\Delta}{2}\rfloor-1$ vertices}] (xy1) at (-2.5,4.33) {};
    \node[fill] (xy2) at (-1.5625,3.78875) {};
    \node[fill] (xy3) at (-0.625,3.2475) {};

    \node[fill,label={[label distance=-0.7cm]above:$\lceil\frac{\Delta}{2}\rceil$ vertices}] (zx1) at (7.5,4.33) {};
    \node[fill] (zx2) at (6.5625,3.78875) {};
    \node[fill] (zx3) at (5.625,3.2475) {};

    \node[fill,label={[label distance = -0.7cm]below:$\lfloor\frac{\Delta}{2}\rfloor$ vertices}] (yz1) at (2.5,-4.33) {};
    \node[fill] (yz2) at (2.5,-3.2475) {};
    \node[fill] (yz3) at (2.5,-2.165) {};

\end{scope}

\begin{scope}[every edge/.style={draw=black,thick}]
    \path (x) edge (y);
    \path (y) edge (z);
    \path (z) edge (x);

    \path (x) edge[bend left] (y);

    \path (x) edge (xy1);
    \path (x) edge (xy2);
    \path (x) edge (xy3);

    \path (y) edge (xy1);
    \path (y) edge (xy2);
    \path (y) edge (xy3);

    \path (x) edge (zx1);
    \path (x) edge (zx2);
    \path (x) edge (zx3);

    \path (z) edge (zx1);
    \path (z) edge (zx2);
    \path (z) edge (zx3);

    \path (y) edge (yz1);
    \path (y) edge (yz2);
    \path (y) edge (yz3);

    \path (z) edge (yz1);
    \path (z) edge (yz2);
    \path (z) edge (yz3);

    \path[dashed] (xy) edge (xy3);
    \path[dashed] (yz) edge (yz3);
    \path[dashed] (zx) edge (zx3);
\end{scope}
\draw[rotate=-30] (-0.625-1.4,3.2475-0.7) ellipse (3cm and 0.5cm);
\draw[rotate=30] (5+0.625+1,3.2475-3.25) ellipse (3cm and 0.5cm);
\draw[rotate=90] (-2,-2.5) ellipse (3cm and 0.5cm);
\end{tikzpicture}
\vspace{-0.9cm}
\caption{A graph with girth 3 and $\chi^2=\lfloor\frac{3\Delta}{2}\rfloor+1$.}
\end{subfigure}
\begin{subfigure}[b]{0.48\textwidth}
\centering
\begin{tikzpicture}[scale=0.4]
\begin{scope}[every node/.style={circle,thick,draw,minimum size=1pt,inner sep=1.5}]
    \node[fill] (y) at (0,0) {};
    \node[fill] (z) at (5,0) {};
    \node[fill] (x) at (2.5,4.33) {};

    \node[fill] (xy) at (1.25,2.165) {};
    \node[fill] (yz) at (2.5,0) {};
    \node[fill] (zx) at (3.75,2.165) {};

    \node[fill,label={[label distance=-1cm]above:$\lfloor\frac{\Delta}{2}\rfloor-1$ vertices}] (xy1) at (-2.5,4.33) {};
    \node[fill] (xy2) at (-1.5625,3.78875) {};
    \node[fill] (xy3) at (-0.625,3.2475) {};

    \node[fill,label={[label distance=-0.7cm]above:$\lceil\frac{\Delta}{2}\rceil$ vertices}] (zx1) at (7.5,4.33) {};
    \node[fill] (zx2) at (6.5625,3.78875) {};
    \node[fill] (zx3) at (5.625,3.2475) {};

    \node[fill,label={[label distance = -0.7cm]below:$\lfloor\frac{\Delta}{2}\rfloor$ vertices}] (yz1) at (2.5,-4.33) {};
    \node[fill] (yz2) at (2.5,-3.2475) {};
    \node[fill] (yz3) at (2.5,-2.165) {};

\end{scope}

\begin{scope}[every edge/.style={draw=black,thick}]
    \path (x) edge (y);
    \path (y) edge (z);
    \path (z) edge (x);

    \path (x) edge (xy1);
    \path (x) edge (xy2);
    \path (x) edge (xy3);

    \path (y) edge (xy1);
    \path (y) edge (xy2);
    \path (y) edge (xy3);

    \path (x) edge (zx1);
    \path (x) edge (zx2);
    \path (x) edge (zx3);

    \path (z) edge (zx1);
    \path (z) edge (zx2);
    \path (z) edge (zx3);

    \path (y) edge (yz1);
    \path (y) edge (yz2);
    \path (y) edge (yz3);

    \path (z) edge (yz1);
    \path (z) edge (yz2);
    \path (z) edge (yz3);

    \path[dashed] (xy) edge (xy3);
    \path[dashed] (yz) edge (yz3);
    \path[dashed] (zx) edge (zx3);
\end{scope}
\draw[rotate=-30] (-0.625-1.4,3.2475-0.7) ellipse (3cm and 0.5cm);
\draw[rotate=30] (5+0.625+1,3.2475-3.25) ellipse (3cm and 0.5cm);
\draw[rotate=90] (-2,-2.5) ellipse (3cm and 0.5cm);
\end{tikzpicture}
\vspace{-0.9cm}
\caption{A graph with girth 4 and $\chi^2=\lfloor\frac{3\Delta}{2}\rfloor-1$.}
\end{subfigure}
\caption{Graphs with $\chi^2\approx \frac32 \Delta$ \cite{wegner}.}
\label{wegner figure}
\end{figure}

\begin{figure}[htbp]
\begin{subfigure}[b]{0.24\textwidth}
\centering
\begin{tikzpicture}[scale=0.6]
\begin{scope}[every node/.style={circle,thick,draw,minimum size=1pt,inner sep=1.5}]
    \node[fill] (y) at (0,0) {};
    \node[fill] (z) at (5,0) {};
    \node[fill] (x) at (2.5,4.33) {};
    
    \node[fill] (1) at (2.5,2) {};
    \node[fill] (2) at (2.5,3.17) {};
    \node[fill] (3) at (1.25,1) {};
    \node[fill] (4) at (3.75,1) {};
\end{scope}

\begin{scope}[every edge/.style={draw=black,thick}]
    \path (x) edge (y);
    \path (y) edge (z);
    \path (z) edge (x);
    \path (1) edge (x);
    \path (1) edge (y);
    \path (1) edge (z);
\end{scope}
\end{tikzpicture}
\caption{$\Delta=3$ and $\chi^2\geq 7$.}
\end{subfigure}
\begin{subfigure}[b]{0.24\textwidth}
\centering
\begin{tikzpicture}[scale=0.6]
\begin{scope}[every node/.style={circle,thick,draw,minimum size=1pt,inner sep=1.5}]
    \node[fill] (1) at (0,0) {};
    \node[fill] (2) at (5,0) {};
    \node[fill] (3) at (0,5) {};
    \node[fill] (4) at (5,5) {};
    \node[fill] (5) at (2.5,2.5) {};
    \node[fill] (6) at (2.5,3.75) {};
    \node[fill] (7) at (3.75,2.5) {};
    \node[fill] (8) at (2.5,1.25) {};
    \node[fill] (9) at (1.25,2.5) {};
\end{scope}

\begin{scope}[every edge/.style={draw=black,thick}]
    \path (1) edge (2);
    \path (1) edge (3);
    \path (3) edge (4);
    \path (2) edge (4);
    \path (5) edge (6);
    \path (5) edge (7);
    \path (5) edge (8);
    \path (5) edge (9);
    \path (1) edge (9);
    \path (3) edge (9);
    \path (3) edge (6);
    \path (4) edge (6);
    \path (4) edge (7);
    \path (2) edge (7);
    \path (2) edge (8);
    \path (1) edge (8);
\end{scope}
\end{tikzpicture}
\caption{$\Delta=4$ and $\chi^2\geq 9$.}
\end{subfigure}
\begin{subfigure}[b]{0.24\textwidth}
\centering
\begin{tikzpicture}[scale=0.6]
\begin{scope}[every node/.style={circle,thick,draw,minimum size=1pt,inner sep=1.5}]
    \node[fill] (1) at (0,0) {};
    \node[fill] (2) at (5,0) {};
    \node[fill] (3) at (2.5,4.33) {};
    \node[fill] (4) at (1.5,1) {};
    \node[fill] (5) at (2.5,0.714) {};
    \node[fill] (6) at (3.5,0.429) {};
    \node[fill] (7) at (2.5,2.5) {};
    \node[fill] (8) at (2.5,3.5) {};
    \node[fill] (9) at (2,1.75) {};
    \node[fill] (10) at (3,1.5) {};
\end{scope}

\begin{scope}[every edge/.style={draw=black,thick}]
    \path (1) edge (2);
    \path (1) edge (3);
    \path (2) edge (3);
    \path (1) edge (4);
    \path (2) edge (4);
    \path (1) edge (8);
    \path (3) edge (7);
    \path (4) edge (7);
    \path (6) edge (7);
    \path (1) edge (5);
    \path (8) edge (9);
    \path (10) edge (2);
    \path (8) edge (2);
    \path (9) edge (5);
    \path (9) edge (10);
    \path (5) edge (10);
\end{scope}
\end{tikzpicture}
\caption{$\Delta=5$ and $\chi^2\geq 10$.}
\end{subfigure}
\begin{subfigure}[b]{0.24\textwidth}
\centering
\begin{tikzpicture}[scale=0.6]
\begin{scope}[every node/.style={circle,thick,draw,minimum size=1pt,inner sep=1.5}]
    \node[fill] (1) at (0,0) {};
    \node[fill] (2) at (0,5) {};
    \node (3) at (-3,2.5) {};
    \node[fill] (4) at (-2,2.5) {};
    \node[fill] (5) at (-1,2.5) {};
    \node[fill] (6) at (0,2.5) {};
    \node[fill] (7) at (1,2) {};
    \node[fill] (71) at (1,3) {};
    \node[fill] (8) at (2,2) {};
    \node[fill] (81) at (2,3) {};
    \node[fill] (9) at (3,2) {};
    \node[fill] (91) at (3,3) {};
\end{scope}

\begin{scope}[every edge/.style={draw=black,thick}]
	\path (1) edge (2);
	\path (1) edge[dashed] (3);
	\path (1) edge (4);
	\path (1) edge (5);
	\path (2) edge[dashed] (3);
	\path (2) edge (4);
	\path (2) edge (5);
	\path (1) edge (7);
	\path (1) edge (8);
	\path (1) edge (9);
	\path (7) edge (71);
	\path (8) edge (81);
	\path (9) edge (91);
	\path (2) edge (71);
	\path (2) edge (81);
	\path (2) edge (91);
	\path (71) edge (91);
	\path (7) edge (81);
	\path (81) edge (9); 
\end{scope}
\end{tikzpicture}
\caption{$6\leq \Delta \leq 7$ and \\ $\chi^2\geq \Delta+5$.}
\end{subfigure}
\caption{Constructions by Wegner in \cite{wegner}.}
\label{wegner figure2}
\end{figure}
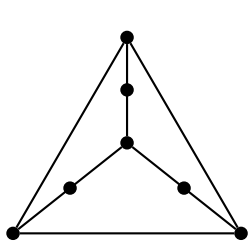
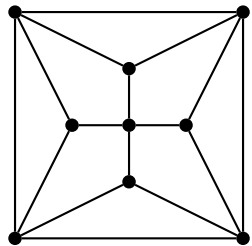
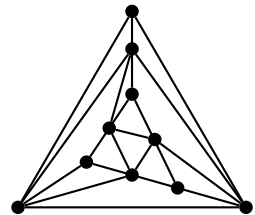
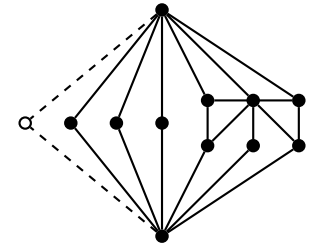

Wegner's conjecture motivated extensive researches on $2$-distance chromatic number of sparse graphs, either of planar graphs with high girth or of graphs with upper bounded maximum average degree which are directly linked due to \Cref{maximum average degree and girth proposition}.

\begin{proposition}[Folklore]\label{maximum average degree and girth proposition}
For every planar graph $G$, $(\mad(G)-2)(g(G)-2)<4$.
\end{proposition}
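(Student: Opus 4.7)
The plan is to extract a dense planar subgraph of $G$ and apply Euler's formula to it. I would let $H \subseteq G$ be a subgraph achieving $\ad(H) = \mad(G)$. Such an $H$ is planar and satisfies $g(H) \geq g(G)$, since deleting vertices or edges can only lengthen shortest cycles. If $H$ is acyclic, then $\ad(H) < 2$ and the left-hand side of the claimed inequality is non-positive, so we are done. Otherwise, by replacing $H$ by a connected component achieving its average degree (a standard convex-combination argument), I may assume $H$ is connected and has finite girth $g := g(H) \geq g(G) \geq 3$.

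Next I would invoke Euler's formula on a planar embedding of $H$. Writing $n = |V(H)|$, $m = |E(H)|$, and $f$ for the number of faces, Euler's formula gives $n - m + f = 2$. Because $H$ is simple with girth $g$, every face is bounded by a closed walk of length at least $g$, so double counting incidences between faces and edges yields $2m \geq g f$, hence $f \leq 2m/g$. Substituting into Euler's formula and rearranging produces the standard edge bound
\[
m \;\leq\; \frac{g(n-2)}{g-2}.
\]

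The remaining step is purely algebraic. Dividing by $n$ gives $\ad(H) = 2m/n < 2g/(g-2)$, so
\[
\mad(G) - 2 \;=\; \ad(H) - 2 \;<\; \frac{2g}{g-2} - 2 \;=\; \frac{4}{g-2}.
\]
Since $g(G) - 2 \leq g - 2$ and $g - 2 > 0$, multiplying both sides of this strict inequality by the positive quantity $g(G) - 2$ yields $(\mad(G) - 2)(g(G) - 2) < 4(g(G)-2)/(g-2) \leq 4$, as required.

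There is no genuine obstacle in this argument: the heart is simply Euler's formula combined with the girth-induced lower bound on face length. The only care required is the small bookkeeping at the boundary (acyclic or disconnected $H$), each of which is dispatched by the one-line reductions above; even the case $\mad(G) \leq 2$ is automatic, since then the product on the left is non-positive.
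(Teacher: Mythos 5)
The paper states this proposition as folklore and gives no proof, so there is nothing to compare against; your argument is the standard one (extract a densest subgraph, pass to a cyclic connected component, and combine Euler's formula with the girth lower bound on face lengths) and it is correct. The only cosmetic point is that the component you select could itself still be acyclic, in which case its finite girth is not guaranteed, but then $\ad(H)<2$ and your first reduction already disposes of that case.
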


As a consequence, any theorem with an upper bound on $\mad(G)$ can be translated to a theorem with a lower bound on $g(G)$ under the condition that $G$ is planar. Many results have taken the following form: \textit{every graph $G$ of $\mad(G)\leq m_0$ and $\Delta(G)\geq \Delta_0$ satisfies $\chi^2(G)\leq \Delta(G)+c(m_0,\Delta_0)$ where $c(m_0,\Delta_0)$ is a small constant depending only on $m_0$ and $\Delta_0$}. Due to \Cref{maximum average degree and girth proposition}, as a corollary, we have the same results on planar graphs of girth $g\geq g_0(m_0)$ where $g_0$ depends on $m_0$. \Cref{recap table 2-distance} shows all known such results, up to our knowledge, on the $2$-distance chromatic number of planar graphs with fixed girth, either proven directly for planar graphs with high girth or came as a corollary of a result on graphs with bounded maximum average degree.

\begin{table}[H]
\begin{center}
\scalebox{0.8}{%
\begin{tabular}{||c||c|c|c|c|c|c|c|c||}
\hline
\backslashbox{$g_0$ \kern-1em}{\kern-1em $\chi^2(G)$} & $\Delta+1$ & $\Delta+2$ & $\Delta+3$ & $\Delta+4$ & $\Delta+5$ & $\Delta+6$ & $\Delta+7$ & $\Delta+8$\\
\hline \hline
$3$ & \slashbox{\phantom{\ \ \ \ \ }}{} & \slashbox{\phantom{\ \ \ \ \ }}{} & \slashbox{\phantom{\ \ \ \ \ }}{} & \makecell{$\Delta=3$ \cite{tho18,har16}\\ \cancel{$\Delta\geq 4$}\tablefootnote{\Cref{wegner figure2}} \footref{wegner figure i}}& \cancel{$\Delta\geq 10$}\tablefootnote{\label{wegner figure i}\Cref{wegner figure}(i)} & \cancel{$\Delta\geq 12$}\footref{wegner figure i} & \cancel{$\Delta\geq 14$}\footref{wegner figure i} & \cancel{$\Delta\geq 16$}\footref{wegner figure i} \\
\hline
$4$ & \slashbox{\phantom{\ \ \ \ \ }}{} & \slashbox{\phantom{\ \ \ \ \ }}{} & \cancel{$\Delta\geq 10$}\tablefootnote{\label{wegner figure ii}\Cref{wegner figure}(ii)} & \cancel{$\Delta\geq 12$}\footref{wegner figure ii} & \cancel{$\Delta\geq 14$}\footref{wegner figure ii} & \cancel{$\Delta\geq 16$}\footref{wegner figure ii} & \cancel{$\Delta\geq 18$}\footref{wegner figure ii} & \cancel{$\Delta\geq 20$}\footref{wegner figure ii} \\
\hline
$5$ & \slashbox{\phantom{\ \ \ \ \ }}{} &\makecell{$\Delta\geq 10^7$ \cite{bon19}\footref{list footnote} \\ \cancel{$\Delta=4$}\tablefootnote{\Cref{delta+3 figure}(ii)}} & $\Delta\geq 339$ \cite{don17b} &$\Delta\geq 312$ \cite{don17} & $\Delta\geq 15$ \cite{bu18b}\tablefootnote{\label{other footnote}Corollaries of more general colorings of planar graphs.} &$\Delta\geq 12$ \cite{bu16}\footref{list footnote} & $\Delta\neq 7,8$ \cite{don17} &all $\Delta$ \cite{dl16}\\
\hline
$6$ & \slashbox{\phantom{\ \ \ \ \ }}{} &$\Delta\geq 17$ \cite{bon14}\footref{mad footnote} &$\Delta\geq 9$ \cite{bu16}\footref{list footnote} & &all $\Delta$ \cite{bu11} & & & \\
\hline
$7$ & $\Delta\geq 16$ \cite{iva11}\tablefootnote{\label{list footnote}Corollaries of 2-distance list-colorings of planar graphs.} & $\Delta \geq 10$ \cite{lm21g7d10g8d6} & $\Delta\geq 6$ \cite{la21}\footref{mad list footnote} &$\Delta=4$ \cite{cra13}\tablefootnote{\label{mad list footnote}Corollaries of 2-distance list-colorings of graphs with a bounded maximum average degree.} & & & & \\
\hline
$8$ & $\Delta\geq 9$ \cite{lmpv19}\footref{other footnote}& $\Delta \geq 6$ \cite{lm21g7d10g8d6} & $\Delta\geq 4$ \cite{la21}\footref{mad list footnote} & & & & & \\
\hline
$9$ &$\Delta\geq 7$ \cite{lm21}\footref{mad footnote} &$\Delta=5$ \cite{bu15}\footref{mad list footnote} &$\Delta=3$ \cite{cra07}\footref{list footnote} & & & & & \\
\hline
$10$ & $\Delta\geq 6$ \cite{iva11}\footref{list footnote} & \cellcolor{gray!50} $\Delta \geq 4$\tablefootnote{\label{ours}Corollary of our result.} & & & & & & \\
\hline
$11$ & \cancel{$\Delta = 3$} \cite{lm21g21d3} & $\Delta=4$ \cite{cra13}\footref{mad list footnote} & & & & & & \\
\hline
$12$ & $\Delta=5$ \cite{iva11}\footref{list footnote} &$\Delta=3$ \cite{bi12}\footref{list footnote} & & & & & &\\
\hline
$13$ & & & & & & & &\\
\hline
$14$ &$\Delta\geq 4$ \cite{bon13}\tablefootnote{\label{mad footnote}Corollaries of 2-distance colorings of graphs with a bounded maximum average degree.} & & & & & & &\\
\hline
$\dots$ & & & & & & & & \\
\hline
$21$ & $\Delta=3$ \cite{lm21g21d3} & & & & & & & \\
\hline
\end{tabular}}
\caption{The latest results with a coefficient 1 before $\Delta$ in the upper bound of $\chi^2$.}
\label{recap table 2-distance}
\end{center}
\end{table} 

For example, the result from line ``7'' and column ``$\Delta + 1$'' from \Cref{recap table 2-distance} reads as follows : ``\emph{every planar graph $G$ of girth at least 7  and of $\Delta$ at least 16 satisfies $\chi^2(G)\leq \Delta+1$}''. The crossed out cases in the first column correspond to the fact that, for $g_0\leq 6$, there are planar graphs $G$ with $\chi^2(G)=\Delta+2$ for arbitrarily large $\Delta$ \cite{bor04,dvo08b}. There exists a construction for $\Delta = 3$, girth $11$, and $\chi^2\geq \Delta+2$  detailed in \cite{lm21g21d3}. In \Cref{delta+3 figure}(i), we show a simple planar graph with girth 4 and $\chi^2\geq \Delta+3$ for all $\Delta\geq 2$, which justifies the crossed out cases in the second column. In \Cref{delta+3 figure}(ii), we show a graph with $\Delta = 4$, girth 5, and $\chi^2\geq 7$. For the first square of the third column, \Cref{wegner figure2} shows graphs with $\chi^2\geq \Delta+4$ for $3\leq \Delta\leq 7$ and starting from $\Delta\geq 8$, the graph in \Cref{wegner figure}(i) verifies $\chi^2\geq \Delta + 5$. Similarly, the rest of the crossed out values of $\Delta$ comes from the constructions in \Cref{wegner figure} and \Cref{wegner figure2}.

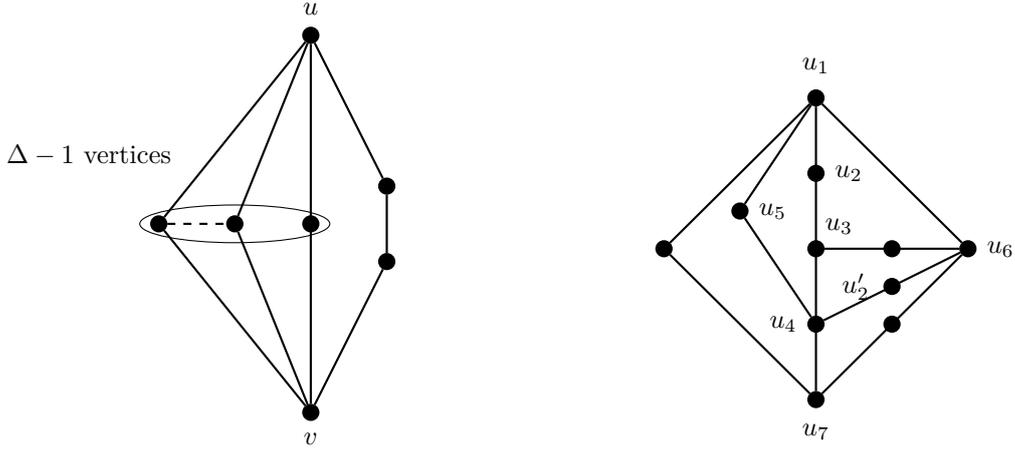
\begin{figure}
\begin{subfigure}[b]{0.49\textwidth}
\centering
\begin{tikzpicture}
\begin{scope}[every node/.style={circle,thick,draw,minimum size=1pt,inner sep=2}]
    \node[fill,label={below:$v$}] (1) at (0,0) {};
    \node[fill,label={above:$u$}] (2) at (0,5) {};
    \node[fill,label={above left:$\Delta - 1$ vertices}] (4) at (-2,2.5) {};
    \node[fill] (5) at (-1,2.5) {};
    \node[fill] (6) at (0,2.5) {};
    \node[fill] (7) at (1,2) {};
    \node[fill] (71) at (1,3) {};
\end{scope}

\begin{scope}[every edge/.style={draw=black,thick}]
	\path (1) edge (2);
	\path (1) edge (4);
	\path (1) edge (5);
	\path (2) edge (4);
	\path (2) edge (5);
	\path (1) edge (7);
	\path (7) edge (71);
	\path (2) edge (71);
	\path (4) edge[dashed] (5);
\end{scope}
\draw (-1,2.5) ellipse (1.25cm and 0.25cm);
\end{tikzpicture}
\caption{A graph with girth 4 and $\chi^2\geq \Delta+3$.}
\end{subfigure}
\begin{subfigure}[b]{0.49\textwidth}
\centering
\begin{tikzpicture}
\begin{scope}[every node/.style={circle,thick,draw,minimum size=1pt,inner sep=2}]
    \node[fill,label={above:$u_1$}] (1) at (0,4) {};
    \node[fill,label={right:$u_2$}] (2) at (0,3) {};
    \node[fill,label={above right:$u_3$}] (3) at (0,2) {};
    \node[fill,label={left:$u_4$}] (4) at (0,1) {};
    \node[fill,label={below:$u_7$}] (5) at (0,0) {};
    \node[fill,label={right:$u_5$}] (6) at (-1,2.5) {};
    \node[fill,label={right:$u_6$}] (7) at (2,2) {};
    \node[fill,label={left:$u'_2$}] (8) at (1,1.5) {};
    \node[fill] (9) at (1,2) {};
    \node[fill] (10) at (-2,2) {};
    \node[fill] (11) at (1,1) {};
\end{scope}

\begin{scope}[every edge/.style={draw=black,thick}]
	\path (1) edge (5);
	\path (1) edge (6);
	\path (6) edge (4);
	\path (1) edge (7);
	\path (7) edge (4);
	\path (3) edge (7);
	\path (1) edge (10);
	\path (10) edge (5);
	\path (7) edge (5);
\end{scope}
\end{tikzpicture}
\caption{A graph with $\Delta=4$, girth 5, and $\chi^2\geq 7$.}
\end{subfigure}
\caption{Graphs with $\chi^2\geq \Delta +3$.}
\label{delta+3 figure}
\end{figure}

We are interested in the case $\chi^2(G)\leq \Delta+2$. More particularly, for a fixed $\Delta_0$, we want to find the lowest value $g_0$ such that planar graphs $G$ with maximum degree $\Delta_0$ and girth at least $g_0$ verify $\chi^2(G)\leq \Delta_0+2$. 

In what follows, we concentrate on the case $\Delta_0= 4$. In \Cref{delta+3 figure}, we provide some simple graphs that give us a lower bound on $g_0$ depending on $\Delta_0$. 

For \Cref{delta+3 figure}(i), $u$, $v$ and their common neighbors must all be colored differently, say they are colored $1$ to $\Delta+1$. The two remaining vertices must be colored with two different colors. As each of them sees every color from $1$ to $\Delta+1$, they must be colored with $\Delta+2$ and $\Delta+3$.

For \Cref{delta+3 figure}(ii), suppose that it is $2$-distance colorable with only six colors. Vertices $u_1$, $u_2$, $u_3$, $u_4$, and $u_5$ form a cycle of length $5$ so they must all be colored differently, say $u_i$ is colored $i$ for $1\leq i\leq 5$. Vertex $u_6$ sees every color from $1$ to $5$ so it must be colored $6$. Vertex $u'_2$ sees every color except $2$ so it must be colored $2$. Finally, $u_7$ sees every color from $1$ to $6$, which is a contradiction.

Now, for the upper bound on $g_0$, we are going to prove that $g_0\leq 10$. In other words,
\begin{theorem}
If $G$ is a planar graph with $g(G)\geq 10$ and $\Delta(G)\geq 4$, then $\chi^2(G)\leq \Delta(G)+2$.
\end{theorem}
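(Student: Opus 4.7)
The plan is to argue by contradiction via the discharging method. Let $(G,L)$ be a counterexample minimizing $|V(G)|+|E(G)|$: a planar graph with $g(G)\geq 10$, $\Delta(G)\geq 4$, and a list assignment $L$ with $|L(v)|\geq \Delta+2$ everywhere, such that $G$ admits no 2-distance $L$-coloring while every proper subgraph does (for any such list assignment). Observe that $G$ is connected and $\delta(G)\geq 2$: a vertex of degree at most $1$ has at most $\Delta$ vertices at distance $\leq 2$ in $G$, so any 2-distance $L$-coloring of $G$ minus that vertex (existing by minimality) extends to it, since fewer than $\Delta+2$ colors are forbidden. Note also that, by minimality, $G$ has no vertex $v$ such that the number of distinct colors used by vertices at distance $\leq 2$ from $v$ is at most $|L(v)|-1$ in some coloring of a smaller graph obtained by an edge- or vertex-deletion; this is the generic engine behind each reducible configuration below.

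Next, I would establish a battery of reducible configurations constraining how $2$- and $3$-vertices cluster in $G$. Typical targets: no two adjacent $2$-vertices form a path whose far endpoints have small combined degree; no $3$-vertex is incident with too many light paths (paths of $2$-vertices); no $\geq 4$-vertex admits more than a controlled number of $2$-neighbors with a small second neighborhood; no long induced thread of $2$-vertices strung between two low-degree anchors. Each such lemma is proved by deleting the private piece (a leaf edge, a thread, or a small subgraph) and extending a 2-distance $L$-coloring of the smaller graph: one shows that the set of colors forbidden at the deleted vertices is of size strictly less than $\Delta+2$, invoking a greedy order when several deleted vertices must be colored in sequence, or a short Hall/matching argument inside the square graph when they are mutually adjacent in $G^{2}$.

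The discharging step then uses Euler's formula combined with $g(G)\geq 10$. I would assign initial charge $\mu(v)=d(v)-4$ to each vertex and $\mu(f)=d(f)-4$ to each face, so that $\sum_v\mu(v)+\sum_f\mu(f)=-8$, with $\mu(f)\geq 6$ for every face (since $d(f)\geq 10$) and the only negative charges concentrated on $2$-vertices ($-2$) and $3$-vertices ($-1$). I would design redistribution rules sending charge from incident large faces and from adjacent $\geq 4$-vertices toward $2$- and $3$-vertices, with rates tuned so that, after discharging, every vertex and every face has non-negative charge. The reducible configurations feed directly into the verification: they cap, for each face and each $\geq 4$-vertex, how many light neighbors must be supported, so that the gain of a $2$-vertex reaches $+2$ and that of a $3$-vertex reaches $+1$ while donors stay non-negative. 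The resulting inequality $0\leq \sum\mu=-8$ yields the contradiction.

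The main obstacle I expect is the reducibility step, not the discharging. Because $\Delta$ may be as small as $4$, we have only $\Delta+2=6$ colors, and each $2$-vertex together with its neighbors and second neighbors can easily exhaust a large fraction of the palette; configurations that are routinely reducible at large $\Delta$ fail here. Moreover, the list setting forbids color permutations, so the extension arguments must be purely numerical. The delicate cases will be $3$-vertices with two or more $2$-neighbors, threads of $2$-vertices whose endpoints are themselves of degree $3$ or $4$, and $4$-vertices saturated with light neighbors; calibrating the reducible configurations so they collectively force $\mad(G)$ to behave as if $g(G)\geq 10$ predicts, while remaining provable with only six colors, is the technical heart of the argument.
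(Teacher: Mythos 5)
Your proposal is a strategy outline rather than a proof: the general frame (minimal counterexample, reducible configurations, discharging) is the right one and matches the paper's, but everything that actually constitutes the proof is left unspecified. You write ``typical targets'' for the reducible configurations and acknowledge that calibrating them ``is the technical heart of the argument,'' yet you neither exhibit a single concrete configuration nor prove its reducibility, and your discharging rules are described only as ``rates tuned so that'' the charges come out non-negative. With only $\Delta+2=6$ colors at $\Delta=4$, the extension arguments are genuinely delicate: the paper needs ten separate list-colorability lemmas (its \Cref{colorable figure}), a classification of $(1,1,1)$-vertices into small, medium, large and huge, the notion of a special $(1,1,0)$-vertex, and structural lemmas such as \Cref{3-111-111}, \Cref{4-vertex lemma} and \Cref{3-4-vertex lemma}, each with a nontrivial case analysis on intersections of residual lists. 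None of that is recoverable from your sketch, so the argument as written does not establish the theorem.

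Two further points of comparison. First, the paper does not attack all $\Delta\geq 4$ with one discharging argument: it quotes known results for $\Delta=5$ (girth $9$) and $\Delta\geq 6$ (girth $8$) and only proves the $\Delta=4$ case, which is what makes a fixed palette of $6$ colors and degree bounded by $4$ exploitable in the reducibility proofs; a uniform set of configurations reducible for every $\Delta\geq 4$ would be harder to design than your sketch suggests. Second, the paper discharges on vertices only, with initial charge $4d(u)-10$ justified by $\mad(G)<\frac52$ (which follows from planarity and girth $10$ via \Cref{maximum average degree and girth proposition}), rather than using Euler's formula with face charges as you propose. Your vertex-and-face scheme is workable in principle for planar graphs, but it is strictly less general and adds the burden of verifying non-negativity on faces; if you pursue this, the $\mad$-based vertex-only discharging is the cleaner route.
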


As the following results are already known:
\begin{theorem}[Bu \textit{et al.} \cite{bu15}]
If $G$ is a planar graph with $g(G)\geq 9$ and $\Delta(G)=5$, then $\chi^2(G)\leq \Delta(G)+2$.
\end{theorem}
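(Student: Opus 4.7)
The natural approach is the discharging method applied to a minimum counterexample. Let $G$ be a planar graph with $g(G)\geq 9$, $\Delta(G)=5$, and $\chi^2(G)> 7$, chosen to minimize $|V(G)|+|E(G)|$. By minimality, every proper subgraph of $G$ admits a $2$-distance $7$-coloring, and I would first use this to derive structural restrictions on $G$: for instance, $G$ is connected and $\delta(G)\geq 2$, since a pendant vertex $v$ with neighbor $u$ is forbidden at most $d(u)\leq 5$ colors, leaving at least two choices to extend the coloring given by minimality on $G-v$.

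Second, I would exploit the sparsity coming from the girth hypothesis. Since $g(G)\geq 9$, \Cref{maximum average degree and girth proposition} gives $\mathrm{mad}(G)<\tfrac{18}{7}$. I assign initial charges $\mu(v)=7d(v)-18$ to each vertex and $\mu(f)=2|f|-18$ to each face. Euler's formula then gives total charge exactly $-36$. Every face has $\mu(f)\geq 0$, and among vertices only those of degree $1$ or $2$ are negative, with $\mu(v)=-11$ for $d(v)=1$ and $\mu(v)=-4$ for $d(v)=2$, while vertices of degrees $3,4,5$ carry $+3,+10,+17$ respectively. The aim is to redistribute charge and contradict the total.

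Third, I would enlarge the list of reducible configurations, ruling out (among others) two consecutive $2$-vertices on a path, a $2$-vertex whose two neighbors both have degree $3$, and various local structures around $3$-vertices that are incident to several $2$-vertices whose far endpoints are also of small degree. The reductions are verified by a greedy recoloring argument: removing a carefully chosen subset $S$ and $2$-distance $7$-coloring $G-S$ by minimality, one checks that each removed vertex has enough ``free'' colors in its distance-$\leq 2$ ball to be recolored in some order. Then I would design discharging rules that push charge from vertices of degree $\geq 3$ and from faces of length $\geq 10$ toward $2$-vertices, either through direct neighbors or along short $2$-vertex threads bordering a common face. Showing that, in the absence of the reducible configurations, every vertex and face ends with non-negative final charge contradicts the total $-36$ and completes the proof.

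The main obstacle is the tight interaction between the reducible-configuration list and the discharging rules. Each $2$-vertex needs to absorb $+4$ of charge, but degree-$3$ vertices carry only $+3$, so a $2$-vertex whose two neighbors are both $3$-vertices can only be saved by face contributions or by longer-range rules propagating charge from a nearby degree-$4$ or $5$ vertex. Calibrating these longer-range rules so that no degree-$3$ vertex (which after paying for its neighbors could drop below zero) is over-taxed, while keeping faces of length exactly $9$ from becoming net donors, is the delicate step; this is where I expect the bulk of the case analysis to lie, and where the exact value $g_0=9$ (rather than a larger girth) forces the reducible list to be finer than in the analogous $g\geq 10$ arguments of the rest of the paper.
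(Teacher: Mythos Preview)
This theorem is not proved in the present paper; it is quoted as a known result of Bu, Lv and Yan~\cite{bu15} and is used only to reduce the paper's task to the case $\Delta=4$. There is therefore no ``paper's own proof'' to compare your attempt against. (In fact, as indicated in the table's footnote, the result in~\cite{bu15} is stated for graphs with bounded maximum average degree, and the girth-$9$ statement is obtained as a corollary via \Cref{maximum average degree and girth proposition}.)

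What you have written is not a proof but a plausible outline of the standard discharging strategy, and as such it is reasonable in spirit. A couple of points are worth flagging, however. First, you invoke $\mad(G)<18/7$ and then set up a vertex-plus-face discharging with total $-36$; these are two alternative frameworks, and mixing them is redundant---pick one. Second, your claimed reducible configurations need care: with $\Delta=5$ and only $7$ colors, a pair of adjacent $2$-vertices $v,w$ with $5$-vertex endpoints is \emph{not} obviously reducible by the naive ``delete, color the rest, extend greedily'' argument, since each of $v,w$ may see six already-colored vertices and they must also differ from one another. The actual argument in~\cite{bu15} (and in comparable papers) requires a more refined list of configurations and typically some Hall-type or small-case color-exchange reasoning, not just greedy extension. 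Your outline correctly identifies where the difficulty lies---balancing the $+3$ budget of $3$-vertices against the $-4$ deficit of $2$-vertices---but stops well short of resolving it.
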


\begin{theorem}[La and Montassier \cite{lm21g7d10g8d6}]
If $G$ is a planar graph with $g(G)\geq 8$ and $\Delta(G)\geq 6$, then $\chi^2(G)\leq \Delta(G)+2$.
\end{theorem}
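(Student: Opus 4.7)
The plan is to follow the standard reducibility-plus-discharging scheme for $2$-distance coloring of sparse planar graphs.

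\textbf{Minimum counterexample and sparsity.} I would fix $\Delta \geq 6$ and let $H$ be a counterexample minimizing $|V(H)| + |E(H)|$, namely a planar graph with $g(H) \geq 8$, maximum degree at most $\Delta$, and $\chi^2(H) > \Delta + 2$. Phrasing the hypothesis with $\Delta$ as an upper bound (rather than exactly the maximum degree) makes every proper subgraph of $H$ automatically $2$-distance $(\Delta+2)$-colorable by minimality. The girth-$8$ assumption together with \Cref{maximum average degree and girth proposition} gives $\mad(H) < 8/3$; equivalently, under the vertex charge $\mu(v) = 3d(v) - 8$ the total $\sum_v \mu(v) = 6|E(H)| - 8|V(H)|$ is strictly negative. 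Only degree-$2$ vertices are in deficit ($\mu = -2$), while a degree-$3$ vertex carries only $+1$ and a degree-$\Delta$ vertex carries $3\Delta - 8 \geq 10$.

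\textbf{Reducible configurations.} The structural lemmas would first rule out leaves: a degree-$1$ vertex $v$ with neighbor $u$ sees at most $\Delta$ vertices at distance $\leq 2$, so after coloring $H - v$ by minimality at least two free colors remain for $v$. Next, for a degree-$2$ vertex $v$ with neighbors $u_1, u_2$, the girth-$8$ hypothesis forces the distance-$\leq 2$ neighborhood of $v$ to have exactly $d(u_1) + d(u_2)$ elements (no overlaps, since an overlap would create a cycle of length at most $4$), and the same extension argument yields $d(u_1) + d(u_2) \geq \Delta + 2 \geq 8$. A finer thread analysis -- deleting a maximal path of $2$-vertices and re-inserting its vertices one at a time in a carefully chosen order -- would bound the length of any thread whose two endpoints are not both of degree $\Delta$, and cap the number of $2$-vertices incident to any single $3$- or $4$-vertex. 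These are exactly the local patterns the discharging phase will need.

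\textbf{Discharging.} With this reducibility list, I would apply charge-transfer rules pushing surplus from high-degree vertices toward $2$-vertices. A tentative rule set is: every vertex of degree $d \geq 4$ sends $+1$ to each adjacent $2$-vertex (its residual is then $3d - 8 - d = 2d - 8 \geq 0$), and every $3$-vertex with a $2$-vertex neighbor -- constrained by reducibility to have at most one -- passes its entire $+1$ surplus to it. The inequality $d(u_1) + d(u_2) \geq \Delta + 2 \geq 8$ guarantees that each $2$-vertex has at least one neighbor of degree $\geq 4$, and combined with the thread lemmas one concludes that each $2$-vertex collects at least $+2$. The final charges are then non-negative everywhere, contradicting the negativity of the initial total.

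\textbf{Main obstacle.} The delicate point is that $\mad(H)$ is only barely below $3$: a degree-$3$ vertex carries a surplus of just $+1$, and $2$-vertices on a thread can propagate deficit across multiple edges. The case analysis must pin down precisely the right family of reducible configurations around $3$-vertices and threads -- for example, a $3$-vertex adjacent to several $2$-vertices must be reducible, otherwise it cannot pay everyone -- and then choose discharging rules that close uniformly across all degree classes from $2$ up to $\Delta$. I expect the hardest sub-case to be long threads both of whose endpoints are $\Delta$-vertices, where reducibility has the least slack and charge must be routed carefully across consecutive $2$-vertices.
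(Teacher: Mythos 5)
A preliminary remark: this statement is not proved in the paper you were given; it is quoted from \cite{lm21g7d10g8d6} precisely so that the authors can reduce their own work to the case $\Delta=4$. The fair comparison is therefore with the general method of that line of work, which the present paper instantiates for $\Delta=4$, girth $10$: minimal counterexample, reducible configurations proved by list-coloring arguments (\Cref{colorable lemma}), and discharging. Your skeleton is the right one, and your first steps are sound: phrasing $\Delta$ as an upper bound so that subgraphs remain admissible, deriving $\mad<\frac83$ from \Cref{maximum average degree and girth proposition}, taking $\mu(v)=3d(v)-8$, excluding $1$-vertices, and showing that a $2$-vertex with neighbors $u_1,u_2$ forces $d(u_1)+d(u_2)\geq\Delta+2$ are all correct.

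However, your discharging does not close, and the failure sits exactly where you defer to unspecified ``thread lemmas''. Your $2$-vertex inequality permits two consecutive $2$-vertices provided the outer endpoints have degree $\geq\Delta$, and such $2$-paths are \emph{not} reducible by greedy extension: re-inserting the second vertex of the pair, it sees up to $1+1+1+(\Delta-1)=\Delta+2$ colored vertices, so no free color is guaranteed (in the $\Delta=4$ analogue of this paper, $2$-paths genuinely occur, and \Cref{2-path lemma} only restricts their endpoints). Under your stated rules, an internal vertex of a $2$-path receives $+1$ from its one $3^+$-neighbor and $0$ from its $2$-neighbor, ending at $-1$. The standard repair is to have endvertices pay every $2$-vertex on each incident thread (rule \ru0 of this paper), but then a $\Delta$-vertex incident to $\Delta$ many $2$-paths pays $2\Delta$ against a budget of $3\Delta-8$, which is negative precisely for $\Delta\in\{6,7\}$ --- the very range the theorem must cover. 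So you need genuinely new reducible configurations bounding how many heavy threads a vertex can carry, i.e.\ analogues of \Cref{4-vertex lemma,3-4-vertex lemma}, and these require multi-vertex list arguments in the style of \Cref{colorable lemma}, not greedy extension. Similarly, your claim that a $3$-vertex has at most one $2$-neighbor ``by reducibility'' is unsupported: deleting its two $2$-neighbors and extending again meets exactly $\Delta+2$ constraints, and in the $\Delta=4$ case such $(1,1,0)$- and $(1,1,1)$-vertices exist and must be handled by fractional discharging (\ru2) rather than excluded. In short, the frame is right, but the actual content of the theorem --- identifying and proving reducibility of the configurations that balance the charge at $3$-vertices, along $2$-paths, and at $\Delta$-vertices with many threads --- is missing from the proposal.
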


we only need to prove that:
\begin{theorem}
If $G$ is a planar graph with $g(G)\geq 10$ and $\Delta(G)=4$, then $\chi^2(G)\leq 6$.
\end{theorem}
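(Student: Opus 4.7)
The plan is to argue by minimum counterexample and discharging, following the blueprint used for the companion theorems cited just above. Suppose for a contradiction that the theorem fails, and let $G$ be a counterexample minimizing, say, the number of edges; in the list setting one additionally fixes a bad list assignment $L$ with $|L(v)|\ge 6$ for every vertex $v$. By minimality, one obtains a battery of structural lemmas. The easiest is that $\delta(G)\ge 2$, since a pendant vertex sees at most $\Delta=4$ vertices within distance $2$ and therefore has at least two list-colors available. The crux of this step is to rule out further configurations around $2$- and $3$-vertices. The girth hypothesis is what makes this tractable: $g(G)\ge 10$ guarantees that all vertices within distance $4$ of a fixed vertex are pairwise distinct, so one can count forbidden colors exactly. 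I would aim to exclude long threads of consecutive $2$-vertices, certain local patterns where a $2$-vertex is flanked by two $4$-vertices, and $3$-vertices carrying more than a prescribed number of $2$-vertex neighbours; each such configuration is shown reducible by deleting a small block, extending the $2$-distance coloring of the smaller graph supplied by minimality, and then re-coloring the deleted block by a greedy or Hall-type argument on the lists that remain available.

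The second step is the discharging. Assign initial charges $\mu(v)=d(v)-4$ to each vertex $v$ and $\mu(f)=\ell(f)-4$ to each face $f$; Euler's formula together with $\sum_v d(v)=\sum_f \ell(f)=2|E(G)|$ gives
\[
\sum_{v\in V(G)}\mu(v)+\sum_{f\in F(G)}\mu(f)=-8.
\]
Under our hypotheses, only $2$- and $3$-vertices carry negative initial charge ($-2$ and $-1$ respectively), while every face carries charge at least $g(G)-4\ge 6$. I would then design rules sending charge from faces to incident or nearby $2$- and $3$-vertices, possibly routed through adjacent $4$-vertices on the face boundary, and verify using the structural lemmas of the first step that after redistribution every vertex and face has non-negative final charge. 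Summing would then give $-8\ge 0$, the desired contradiction.

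The main obstacle is the tightness of the color budget $\Delta+2=6$. A single $2$-vertex between two $4$-vertices already has $8$ distinct vertices within distance $2$, so one-vertex extension arguments are hopeless: the proof is forced to use coupled reductions that simultaneously re-color several vertices at once, combined with careful list-counting to guarantee their successful extension. Matching a rich-enough family of reducible configurations against a discharging scheme lean enough to absorb a $-2$ penalty per $2$-vertex (and $-1$ per $3$-vertex) along potentially long face boundaries --- especially faces whose boundary alternates between low-degree vertices, where charge must be drawn from far away --- is where I expect the bulk of the technical work to lie.
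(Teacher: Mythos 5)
Your proposal reproduces the standard template (minimal counterexample, reducible configurations, discharging) but defers essentially all of the mathematical content to work you have not done, and where you do commit to specifics, they diverge from what actually closes the argument. First, the framework: the paper never touches faces or Euler's formula. It proves the stronger statement that every graph with $\mad(G)<\frac52$, girth at least $10$ and $\Delta=4$ is $2$-distance list $6$-colorable, deduces $\mad<\frac52$ from planarity and girth $10$ via the folklore inequality $(\mad(G)-2)(g(G)-2)<4$, and runs a purely vertex-to-vertex discharging with $\mu(u)=4d(u)-10$ and $\sum_u\mu(u)<0$. Your face-based scheme with $\mu(f)=\ell(f)-4$ is not obviously adequate here: a $10$-face can carry six $2$-vertices (no three consecutive), already exhausting its charge $6$ before any $3$-vertex on its boundary is paid, so the budget you sketch does not visibly close and would at minimum need the same long-range routing the paper performs between vertices.

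Second, and more importantly, the reducible configurations you guess at are not the right ones. A $2$-vertex flanked by two $4$-vertices is \emph{not} reducible (as you yourself observe, it sees $8$ vertices) and is handled purely by discharging; and $3$-vertices with three $2$-neighbours (the $(1,1,1)$-vertices) cannot be excluded either --- they are classified as small, medium, large or huge according to the degrees of the \emph{far} endvertices of their incident $1$-paths, and they receive carefully tuned fractional charges $\frac13$, $\frac12$, $1$ from vertices at distance $2$. The configurations that are actually reduced are: $3^+$-paths, $2$-paths with a $3$-vertex endpoint, two $(1,1,1)$-vertices sharing a $2$-neighbour unless both are large, ``special'' $(1,1,0)$-vertices without a $4$-vertex at distance $2$, and $4$-vertices incident to too many paths ending in ``light'' vertices. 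Establishing their reducibility is the bulk of the paper: it requires ten auxiliary list-colorable gadgets (Lemma~\ref{colorable lemma}), each proved by a case analysis on intersections of residual lists of sizes $2$ through $6$. These are precisely the ``coupled reductions combined with careful list-counting'' you correctly predict must exist, but naming the need for them is not the same as supplying them; as written, your proposal contains no complete proof of any nontrivial reducible configuration and no verified discharging, so the argument has a genuine gap at its core.
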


We will be proving a slightly stronger version which states:
\begin{theorem} \label{main theorem}
If $G$ is a graph with $\mad(G)<\frac52$, $g(G)\geq 10$ and $\Delta(G)=4$, then $\chi^2_l(G)\leq 6$.
\end{theorem}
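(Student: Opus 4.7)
The plan is a standard minimum-counterexample plus discharging argument. Suppose \Cref{main theorem} fails and let $H$ be a counterexample minimizing $|V(H)|+|E(H)|$. Then $H$ is connected, has minimum degree at least $1$, and every proper subgraph $H'$ satisfies $\mad(H')\leq\mad(H)<5/2$, so by the minimality of $H$ every such $H'$ admits a $2$-distance list $6$-coloring from any $6$-list assignment. The first phase of the proof is to derive from this minimality a list of forbidden (``reducible'') configurations in $H$; the second phase is to introduce the initial charge $\mu(v)=d(v)-\tfrac52$, whose total is negative by the $\mad$ hypothesis, and redistribute charge so every vertex ends with nonnegative final charge, giving the desired contradiction.

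For the reducibility phase, the basic observation is that a vertex $v$ of degree $d$ has at most $d\Delta=4d$ vertices in its open $2$-neighborhood. When $d=1$ this is at most $4<6$, so any $1$-vertex can always be re-inserted greedily into a $2$-distance list $6$-coloring of $H-v$; combined with extension arguments along paths of $2$-vertices this already forbids several configurations involving $1$-vertices. For higher-degree vertices I expect to prove, at least, that (i) no two $2$-vertices are adjacent, or more generally that a maximal thread of $2$-vertices between $3^+$-vertices has length bounded by a small constant; (ii) a $3$-vertex cannot be adjacent to more than one $2$-vertex; and (iii) local constraints around a $3^+$-vertex that carries several short $2$-threads. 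Each such lemma is proved by removing a small subgraph from $H$, applying the minimality hypothesis to colour the remainder, and extending the colouring to the removed vertices; the girth hypothesis $g(H)\geq 10$ ensures that the various $2$-neighborhoods encountered in the extension behave like disjoint trees, so the number of forbidden colours can be controlled vertex by vertex.

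For the discharging phase, $\mu(v)=d(v)-\tfrac52$ gives $-\tfrac32,-\tfrac12,\tfrac12,\tfrac32$ for $d(v)=1,2,3,4$. I would use rules of the form \textbf{(R1)} each $3^+$-vertex sends $\tfrac32$ to each adjacent $1$-vertex; and \textbf{(R2)} each $3^+$-vertex sends a carefully chosen amount (typically $\tfrac14$ or $\tfrac12$) to each $2$-vertex on a maximal $2$-thread incident to it, the amount depending on the degree of the other endpoint of the thread and on its length. Combined with the reducible configurations, which bound the number of low-degree neighbours a $3^+$-vertex can have and the length of a $2$-thread, the verification that every vertex ends with charge $\geq 0$ reduces to a finite case analysis on the possible ``types'' of $3$- and $4$-vertices, contradicting $\sum_v\mu(v)<0$.

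The hardest part will be calibrating the reducibility lemmas around $2$-vertices. Each $2$-vertex carries a deficit of $-\tfrac12$; a $3$-vertex has only $\tfrac12$ of surplus to spare and a $4$-vertex only $\tfrac32$, so the accounting is tight: each $3$-vertex must essentially subsidise at most one $2$-vertex half-thread, and each $4$-vertex at most three. This forces the reducibility arguments for configurations in which a $3$- or $4$-vertex has several $2$-vertex neighbours on separate threads to be delicate, because the extension argument must simultaneously recolour multiple short paths while each internal $2$-vertex still sees up to $4$ forbidden colours out of the $6$ available. The girth-$10$ hypothesis will be used precisely to guarantee enough independence between the constraints along distinct threads so that a greedy, alternating, or kernel-method extension succeeds; tuning which configurations to forbid, and with which discharging coefficients to pay for them, is where the bulk of the technical work will lie.
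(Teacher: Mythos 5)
Your overall architecture (minimal counterexample, reducible configurations, discharging against $\sum_v(d(v)-\tfrac52)<0$) is the same as the paper's, but two of your planned reducibility claims are false for a minimal counterexample, and this breaks your charge accounting. Claim (ii), that a $3$-vertex is adjacent to at most one $2$-vertex, cannot be proved: a $3$-vertex all three of whose neighbours are $2$-vertices lying on $1$-paths to other $3$-vertices (a $(1,1,1)$-vertex in the paper's terminology) is not eliminable by deleting a few vertices and extending greedily --- the internal $2$-vertex of such a $1$-path already sees six coloured vertices, and no purely local argument removes the configuration in general. The paper has to live with these vertices and classifies them as small, medium, large or huge according to how many of the far endpoints of their three $1$-paths are $4$-vertices. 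Likewise the strong form of your claim (i) fails: a $2$-path (two consecutive $2$-vertices) both of whose endvertices are $4$-vertices survives in the minimal counterexample; only $3$-paths and $2$-paths with a $3$-endvertex are reducible.

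Because $(1,1,1)$-vertices survive, the discharging you sketch cannot close. With $\mu(v)=d(v)-\tfrac52$, a $2$-vertex on a $1$-path needs $\tfrac14$ from each endpoint, so a $3$-vertex with three such neighbours must pay $\tfrac34$ out of a surplus of only $\tfrac12$. Your rules move charge only from a $3^+$-vertex to the $1$- and $2$-vertices on its own incident threads, so nothing covers this deficit. The paper's essential extra ingredient is a second layer of rules (\textbf{R2}) that send charge \emph{across} a $1$-path, from the far endvertex to the needy $(1,1,1)$-vertex at distance two, in amounts depending on its type; making these transfers affordable for the donors requires genuinely nontrivial reducible configurations (two $(1,1,1)$-vertices sharing a $2$-neighbour must both be large; a $4$-vertex cannot have two ``light'' vertices at distance two on its incident paths together with a third low-degree one), and the proofs of these rest on a toolbox of ten list-colourable gadgets established by delicate list-intersection arguments (showing, e.g., that certain lists must be disjoint or nested), not by greedy extension. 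This distance-two charge transfer and the lemmas supporting it are the core of the proof of \Cref{main theorem} and are absent from your plan.
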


The latter also improves upon a result from \cite{cra13}.
\begin{theorem}[Cranston \textit{et al.} \cite{cra13}] 
If $G$ is a planar graph with $g(G)\geq 11$ and $\Delta(G)=4$, then $\chi^2_l(G)\leq 6$.
\end{theorem}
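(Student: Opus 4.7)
The plan is to follow the standard $\mad$-based discharging route. By \Cref{maximum average degree and girth proposition}, every planar graph of girth at least $11$ has $\mad(G) < 2 + \tfrac{4}{9} = \tfrac{22}{9}$, so it suffices to prove the stronger statement that every graph $G$ with $\Delta(G)\leq 4$ and $\mad(G) < \tfrac{22}{9}$ is $2$-distance $6$-choosable. Take a counterexample $G$ minimizing $|V(G)|+|E(G)|$ together with a bad $6$-list assignment $L$, so that any proper subgraph of $G$ admits a $2$-distance $L$-coloring.

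The first step is to build a list of reducible configurations. Minimality immediately gives $\delta(G)\geq 2$: a pendant vertex has at most $5$ other vertices within distance $2$, so after coloring $G-v$ a free list color remains. Next I would forbid short \emph{threads} (maximal paths whose interior vertices all have degree $2$) attached to low-degree endpoints, together with various local patterns around $4$-vertices: a $4$-vertex with too many incident short threads, two nearby $4$-vertices sharing a short thread, a $3$-vertex simultaneously close to two short threads, and the like. For each candidate, the internal vertices are removed, the rest of $G$ is colored by minimality, and the deleted part is re-extended via a greedy or alternating-path argument, using the fact that a degree-$2$ vertex has at most $3+3=6$ other vertices within distance $2$ when both its neighbors have degree at most $3$, together with the Erdős–Rubin–Taylor lemma on list-colorability of even cycles for the tighter extensions.

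For the discharging phase, assign initial weight $\omega(v)=d(v)-\tfrac{22}{9}$, so that $\sum_v \omega(v) < 0$. The discharging rules move charge from $4$-vertices (surplus $+\tfrac{14}{9}$) and $3$-vertices (surplus $+\tfrac{5}{9}$) to $2$-vertices (deficit $-\tfrac{4}{9}$) along threads, with each high-degree endpoint of a thread sending a fixed amount per incident edge into the thread, calibrated to fully fund any thread of acceptable length. Using the reducible configurations one verifies, case by case on thread patterns and $4$-vertex neighborhoods, that every vertex ends with nonnegative final charge, contradicting the total being negative.

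The hard part will be calibrating the reducibility and the discharging rules together: with only $\tfrac{4}{9}$ of slack per vertex on average, every $2$-vertex must be close enough (in thread-distance) to a $4$-vertex that can afford to pay for it. The delicate cases are clusters of $4$-vertices sharing several short threads — where one $4$-vertex must pay for many $2$-vertices simultaneously — and configurations in which $3$-vertices, with their small surplus $\tfrac{5}{9}$, themselves serve as thread endpoints. Producing a list of reducible patterns rich enough to rule out all discharging failure modes, while keeping each reduction provable by the peel-and-extend method, is the main source of case analysis.
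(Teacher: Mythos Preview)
This theorem is not proved in the paper at all; it is simply \emph{cited} as a prior result of Cranston, Erman, and \v{S}krekovski, which the paper's own Theorem~8 then improves (dropping the girth requirement from $11$ to $10$). So there is no ``paper's own proof'' to compare your proposal against.

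As for the proposal itself: what you have written is not a proof but an outline of a strategy. The high-level plan---pass from girth $\geq 11$ to $\mad < \tfrac{22}{9}$, take a minimal counterexample, establish reducible configurations, then discharge with initial charge $d(v)-\tfrac{22}{9}$---is indeed the standard route and is essentially what Cranston \textit{et al.}\ do in the cited paper. But all the actual content is deferred to phrases like ``I would forbid various local patterns'' and ``one verifies, case by case''. The substance of such a proof lies precisely in the explicit list of reducible configurations and the explicit discharging rules, together with the verification that they mesh; you have supplied neither. In particular, your remark about using the Erd\H{o}s--Rubin--Taylor lemma on even cycles is out of place here (the relevant local extensions are on trees, not cycles, since the girth is large), and your accounting sketch for $3$-vertices (surplus $\tfrac{5}{9}$) does not by itself explain how a $3$-vertex that is an endpoint of two or three threads avoids going negative. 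If you want to turn this into a proof, you need to write down the actual configurations and rules and carry out the verification.
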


In \Cref{sec2}, we present the proof of \Cref{main theorem} using the well-known discharging method.

\section{Proof of \Cref{main theorem}}
\label{sec2}

\paragraph{Notations and drawing conventions.} For $v\in V(G)$, the \emph{2-distance neighborhood} of $v$, denoted $N^*_G(v)$, is the set of 2-distance neighbors of $v$, which are vertices at distance at most two from $v$, not including $v$. We also denote $d^*_G(v)=|N^*_G(v)|$. We will drop the subscript and the argument when it is clear from the context. Also for conciseness, from now on, when we say ``to color'' a vertex, it means to color such vertex differently from all of its colored neighbors at distance at most two. Similarly, any considered coloring will be a 2-distance list-coloring. We will also say that a vertex $u$ ``sees'' another vertex $v$ if $u$ and $v$ are at distance at most 2 from each other. 

Some more notations:
\begin{itemize}
\item A \emph{$d$-vertex} ($d^+$-vertex, $d^-$-vertex) is a vertex of degree $d$ (at least $d$, at most $d$). A \emph{$(\db{d}{e})$-vertex} is a vertex of degree between $d$ and $e$ included.
\item A \emph{$k$-path} ($k^+$-path, $k^-$-path) is a path of length $k+1$ (at least $k+1$, at most $k+1$) where the $k$ internal vertices are 2-vertices and the endvertices are $3^+$-vertices.
\item A \emph{$(k_1,k_2,\dots,k_d)$-vertex} is a $d$-vertex incident to $d$ different paths, where the $i^{\rm th}$ path is a $k_i$-path for all $1\leq i\leq d$.
\end{itemize}

As a drawing convention for the rest of the figures, black vertices will have a fixed degree, which is represented, and white vertices may have a higher degree than what is drawn.

Let $G$ be a counterexample to \Cref{main theorem} with the fewest number of vertices. The purpose of the proof is to prove that $G$ cannot exist. In the following sections, we will study the structural properties of $G$ (\Cref{tutu}), then, we will apply a discharging procedure (\Cref{tonton}).

\subsection{Useful observations}

Before studying the structural properties of $G$, we will introduce some useful observations and lemmas that will be the core of the reducibility proofs of our configurations.

For a vertex $u$, let $L(u)$ denote the set of available colors for $u$. For convenience, the lower bound on $|L(u)|$ will be depicted on the figures below the corresponding vertex $u$. 

\begin{lemma} \label{colorable lemma}
Every graph with list assignment $L$ depicted in \Cref{colorable figure} is $L$-list-colorable.
\end{lemma}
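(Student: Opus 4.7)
The plan is to treat each configuration depicted in \Cref{colorable figure} separately, since each one is a small graph (typically a short path or caterpillar on at most a handful of vertices) with an explicit lower bound on $|L(u)|$ attached to every vertex $u$. The overall strategy for each configuration will be to find a vertex ordering $u_1,u_2,\dots,u_n$ such that when we color vertices one at a time in this order, each $u_i$ has strictly fewer previously-colored neighbors (in the $2$-distance sense within the depicted configuration) than its list size allows. Whenever such a greedy ordering exists, the configuration is trivially $L$-list-colorable.

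For those configurations where a purely greedy argument is not enough --- typically when there is a ``tight'' vertex of very small list size sitting in the middle of the depicted subgraph --- the plan is to color the tight vertex first, and then show that the remaining vertices can be colored greedily in some order pointing away from it. If a vertex has list size $1$ or $2$ and conflicts with many neighbors, I would first handle it by branching on the chosen color, remove that color from the lists of its $2$-distance neighbors in the configuration, and then verify that the reduced lists still satisfy a greedy ordering condition. This is the kind of step where I expect to need to look carefully at each figure: the bookkeeping of how much each neighbor's effective list shrinks after a color is committed is routine but error-prone.

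The main obstacle will be configurations where the greedy approach fails regardless of orientation, for instance when two low-list vertices see each other through a $2$-vertex and together block too many colors. For such cases, the plan is to invoke a Hall's-theorem-style argument: view the problem as finding a system of distinct representatives in a bipartite graph whose left side is the set of vertices of the configuration and whose right side is the union of lists. Alternatively, one can argue directly by case analysis on whether two pivotal lists are disjoint, overlap in one element, or coincide; in each subcase, an explicit proper assignment can be exhibited. Since the configurations depicted are finite and very small, the case analysis is guaranteed to terminate, and the bounds written beneath each vertex have presumably been chosen precisely so that the counting always works out with one color to spare.

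Finally, I would organize the write-up as one short paragraph per sub-figure of \Cref{colorable figure}, each beginning with the chosen ordering or tight vertex, followed by a one-line verification that the greedy bound (or Hall's condition) is satisfied. This keeps the proof modular and makes it easy for the reader to check that the reducibility arguments in the later structural lemmas, which all cite \Cref{colorable lemma}, can indeed apply these small building blocks as black boxes.
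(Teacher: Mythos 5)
Your proposal is a plan rather than a proof: none of the ten configurations is actually verified, and the entire content of the lemma lies in that verification. Worse, the two tools you propose to fall back on when greedy fails would not carry you through. The hope that ``the bounds \dots have presumably been chosen precisely so that the counting always works out with one color to spare'' is false already for the simplest configuration, (i) of \Cref{colorable figure}: there $u_1,u_2,u_4$ each have list size $2$ and see exactly two other vertices of the configuration, while $u_3$ has list size $3$ and sees three, so no vertex qualifies to be colored last and no greedy ordering exists. The paper instead splits on whether $L(u_1)=L(u_2)$: if so, $u_3$ is colored outside $L(u_2)$ so that $u_1$, colored last, is effectively constrained by only one colored neighbor; if not, $u_2$ takes a color outside $L(u_1)$. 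Your Hall's-theorem fallback, phrased as a system of distinct representatives over all vertices of the configuration, is the wrong tool: configuration (x) has twelve vertices while the lists may all be drawn from a palette of six colors, so no SDR can exist, and more fundamentally the arguments \emph{depend} on vertices at distance at least three within the configuration being allowed to share a color.

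Your third suggestion --- case analysis on whether pivotal lists are disjoint, overlap, or coincide --- is in fact the engine of the paper's proof, but it must be the main tool rather than a last resort, and it is deployed in two specific ways you do not anticipate. First, for configurations (ii), (v), (viii) the paper proves disjointness claims such as $L(u_5)\cap L(u_1)=\emptyset$, $L(u_5)\cap L(u'_3)=\emptyset$, $L(u_5)\cap L(u_3)=\emptyset$ by contradiction: a common color is assigned to both distant vertices simultaneously, which removes enough conflicts that the rest can be finished greedily; the accumulated disjointness then makes the final greedy order work. Second, the larger configurations (iii), (iv), (vi), (vii), (ix), (x) are not handled from scratch but are reduced to smaller ones via containment claims of the form $L(v'_3)\subset L(v_3)$ --- otherwise color $v'_3$ outside $L(v_3)$ and invoke the previous configuration on what remains --- after which a vertex such as $u''_3$ is colored outside $L(v_3)$ (hence outside $L(v'_3)$) to break the tightness. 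Without the ``merge two distant vertices'' trick and this recursive reduction between subfigures, the plan does not yield a proof.
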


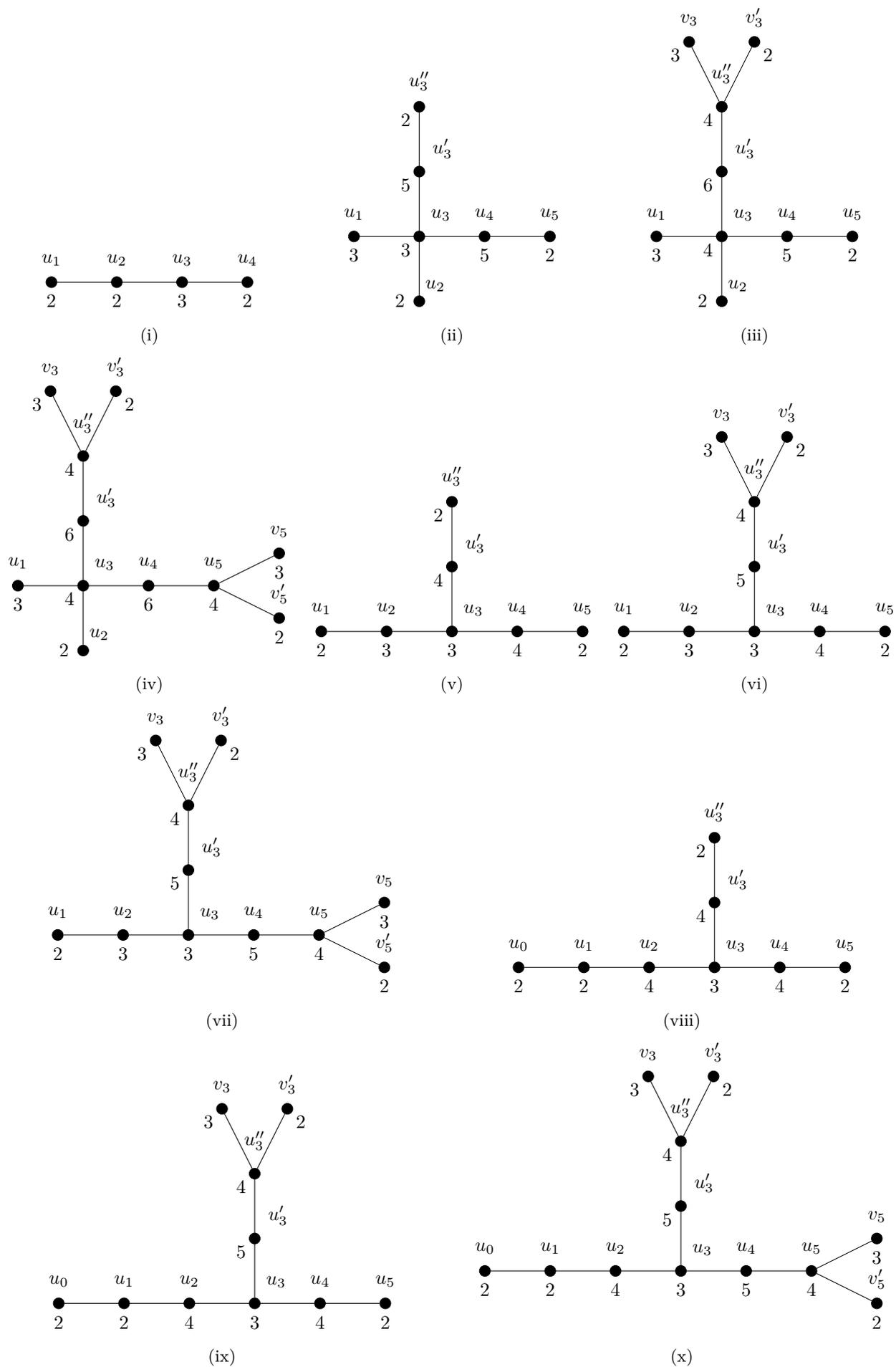
\begin{figure}[!htbp]
\begin{center}
\begin{subfigure}[b]{0.32\textwidth}
\centering
\begin{tikzpicture}[scale=0.6]{thick}
\begin{scope}[every node/.style={circle,draw,minimum size=1pt,inner sep=2}]
    \node[fill,label={above:$u_1$},label={below:$2$}] (1) at (0,0) {};
    \node[fill,label={above:$u_2$},label={below:$2$}] (2) at (2,0) {};
    \node[fill,label={above:$u_3$},label={below:$3$}] (3) at (4,0) {};
    \node[fill,label={above:$u_4$},label={below:$2$}] (4) at (6,0) {};
\end{scope}

\begin{scope}[every edge/.style={draw=black}]
    \path (1) edge (4);
\end{scope}
\end{tikzpicture}
\caption{\label{1}}
\end{subfigure}
\begin{subfigure}[b]{0.32\textwidth}
\centering
\begin{tikzpicture}[scale=0.6]{thick}
\begin{scope}[every node/.style={circle,draw,minimum size=1pt,inner sep=2}]
    \node[fill,label={above:$u_1$},label={below:$3$}] (2) at (2,0) {};
    \node[fill,label={[label distance = +4pt]above right:$u_3$},label={below left:$3$}] (3) at (4,0) {};
    \node[fill,label={above:$u_4$},label={below:$5$}] (4) at (6,0) {};
    \node[fill,label={above:$u_5$},label={below:$2$}] (5) at (8,0) {};
    
    \node[fill,label={[label distance = +4pt]above right:$u'_3$},label={below left:$5$}] (3') at (4,2) {};
    \node[fill,label={above:$u''_3$},label={below left:$2$}] (3'') at (4,4) {};
    
    \node[fill,label={above right:$u_2$},label={left:$2$}] (3''') at (4,-2) {};
\end{scope}

\begin{scope}[every edge/.style={draw=black}]
    \path (2) edge (5);
    \path (3'') edge (3''');
\end{scope}
\end{tikzpicture}
\caption{\label{2}}
\end{subfigure}
\begin{subfigure}[b]{0.32\textwidth}
\centering
\begin{tikzpicture}[scale=0.6]{thick}
\begin{scope}[every node/.style={circle,draw,minimum size=1pt,inner sep=2}]
    \node[fill,label={above:$u_1$},label={below:$3$}] (2) at (2,0) {};
    \node[fill,label={[label distance = +4pt]above right:$u_3$},label={below left:$4$}] (3) at (4,0) {};
    \node[fill,label={above:$u_4$},label={below:$5$}] (4) at (6,0) {};
    \node[fill,label={above:$u_5$},label={below:$2$}] (5) at (8,0) {};
	    
    \node[fill,label={above right:$u_2$},label={left:$2$}] (3''') at (4,-2) {};
    
    \node[fill,label={[label distance = +4pt]above right:$u'_3$},label={below left:$6$}] (3') at (4,2) {};
    \node[fill,label={[label distance = +4pt]above:$u''_3$},label={below left:$4$}] (3'') at (4,4) {};
    \node[fill,label={above:$v_3$},label={below left:$3$}] (v3) at (3,6) {};
    \node[fill,label={above:$v'_3$},label={below right:$2$}] (v'3) at (5,6) {};
\end{scope}

\begin{scope}[every edge/.style={draw=black}]
    \path (2) edge (5);
    \path (3''') edge (3'');
    \path (3'') edge (v3);
    \path (3'') edge (v'3);
\end{scope}
\end{tikzpicture}
\caption{\label{3}}
\end{subfigure}
\begin{subfigure}[b]{0.32\textwidth}
\centering
\begin{tikzpicture}[scale=0.6]{thick}
\begin{scope}[every node/.style={circle,draw,minimum size=1pt,inner sep=2}]
    \node[fill,label={above:$u_1$},label={below:$3$}] (2) at (2,0) {};
    \node[fill,label={[label distance = +4pt]above right:$u_3$},label={below left:$4$}] (3) at (4,0) {};
    \node[fill,label={above:$u_4$},label={below:$6$}] (4) at (6,0) {};
    \node[fill,label={above:$u_5$},label={below:$4$}] (5) at (8,0) {};
	    
    \node[fill,label={above right:$u_2$},label={left:$2$}] (3''') at (4,-2) {};
    
    \node[fill,label={[label distance = +4pt]above right:$u'_3$},label={below left:$6$}] (3') at (4,2) {};
    \node[fill,label={[label distance = +4pt]above:$u''_3$},label={below left:$4$}] (3'') at (4,4) {};
    \node[fill,label={above:$v_3$},label={below left:$3$}] (v3) at (3,6) {};
    \node[fill,label={above:$v'_3$},label={below right:$2$}] (v'3) at (5,6) {};
    
    \node[fill,label={above:$v_5$},label={below:$3$}] (v5) at (10,1) {};
    \node[fill,label={above:$v'_5$},label={below:$2$}] (v'5) at (10,-1) {};
\end{scope}

\begin{scope}[every edge/.style={draw=black}]
    \path (2) edge (5);
    \path (3''') edge (3'');
    \path (3'') edge (v3);
    \path (3'') edge (v'3);
    \path (5) edge (v5);
    \path (5) edge (v'5);
\end{scope}
\end{tikzpicture}
\caption{\label{4}}
\end{subfigure}
\begin{subfigure}[b]{0.32\textwidth}
\centering
\begin{tikzpicture}[scale=0.6]{thick}
\begin{scope}[every node/.style={circle,draw,minimum size=1pt,inner sep=2}]
    \node[fill,label={above:$u_1$},label={below:$2$}] (1) at (0,0) {};
    \node[fill,label={above:$u_2$},label={below:$3$}] (2) at (2,0) {};
    \node[fill,label={[label distance = +4pt]above right:$u_3$},label={below:$3$}] (3) at (4,0) {};
    \node[fill,label={above:$u_4$},label={below:$4$}] (4) at (6,0) {};
    \node[fill,label={above:$u_5$},label={below:$2$}] (5) at (8,0) {};
    
    \node[fill,label={[label distance = +4pt]above right:$u'_3$},label={below left:$4$}] (3') at (4,2) {};
    \node[fill,label={above:$u''_3$},label={below left:$2$}] (3'') at (4,4) {};
\end{scope}

\begin{scope}[every edge/.style={draw=black}]
    \path (1) edge (5);
    \path (3) edge (3'');
\end{scope}
\end{tikzpicture}
\caption{\label{5}}
\end{subfigure}
\begin{subfigure}[b]{0.32\textwidth}
\centering
\begin{tikzpicture}[scale=0.6]{thick}
\begin{scope}[every node/.style={circle,draw,minimum size=1pt,inner sep=2}]
    \node[fill,label={above:$u_1$},label={below:$2$}] (1) at (0,0) {};
    \node[fill,label={above:$u_2$},label={below:$3$}] (2) at (2,0) {};
    \node[fill,label={[label distance = +4pt]above right:$u_3$},label={below:$3$}] (3) at (4,0) {};
    \node[fill,label={above:$u_4$},label={below:$4$}] (4) at (6,0) {};
    \node[fill,label={above:$u_5$},label={below:$2$}] (5) at (8,0) {};
    
    \node[fill,label={[label distance = +4pt]above right:$u'_3$},label={below left:$5$}] (3') at (4,2) {};
    \node[fill,label={[label distance = +4pt]above:$u''_3$},label={below left:$4$}] (3'') at (4,4) {};
    \node[fill,label={above:$v_3$},label={below left:$3$}] (v3) at (3,6) {};
    \node[fill,label={above:$v'_3$},label={below right:$2$}] (v'3) at (5,6) {};
\end{scope}

\begin{scope}[every edge/.style={draw=black}]
    \path (1) edge (5);
    \path (3) edge (3'');
    \path (3'') edge (v3);
    \path (3'') edge (v'3);
\end{scope}
\end{tikzpicture}
\caption{\label{6}}
\end{subfigure}
\begin{subfigure}[b]{0.49\textwidth}
\centering
\begin{tikzpicture}[scale=0.6]{thick}
\begin{scope}[every node/.style={circle,draw,minimum size=1pt,inner sep=2}]
    \node[fill,label={above:$u_1$},label={below:$2$}] (1) at (0,0) {};
    \node[fill,label={above:$u_2$},label={below:$3$}] (2) at (2,0) {};
    \node[fill,label={[label distance = +4pt]above right:$u_3$},label={below:$3$}] (3) at (4,0) {};
    \node[fill,label={above:$u_4$},label={below:$5$}] (4) at (6,0) {};
    \node[fill,label={above:$u_5$},label={below:$4$}] (5) at (8,0) {};
    
    \node[fill,label={[label distance = +4pt]above right:$u'_3$},label={below left:$5$}] (3') at (4,2) {};
    \node[fill,label={[label distance = +4pt]above:$u''_3$},label={below left:$4$}] (3'') at (4,4) {};
    \node[fill,label={above:$v_3$},label={below left:$3$}] (v3) at (3,6) {};
    \node[fill,label={above:$v'_3$},label={below right:$2$}] (v'3) at (5,6) {};
    
    \node[fill,label={above:$v_5$},label={below:$3$}] (v5) at (10,1) {};
    \node[fill,label={above:$v'_5$},label={below:$2$}] (v'5) at (10,-1) {};
\end{scope}

\begin{scope}[every edge/.style={draw=black}]
    \path (1) edge (5);
    \path (3) edge (3'');
    \path (3'') edge (v3);
    \path (3'') edge (v'3);
    \path (5) edge (v5);
    \path (5) edge (v'5);
\end{scope}
\end{tikzpicture}
\caption{\label{7}}
\end{subfigure}
\begin{subfigure}[b]{0.49\textwidth}
\centering
\begin{tikzpicture}[scale=0.6]{thick}
\begin{scope}[every node/.style={circle,draw,minimum size=1pt,inner sep=2}]
	\node[fill,label={above:$u_0$},label={below:$2$}] (0) at (-2,0) {};
    \node[fill,label={above:$u_1$},label={below:$2$}] (1) at (0,0) {};
    \node[fill,label={above:$u_2$},label={below:$4$}] (2) at (2,0) {};
    \node[fill,label={[label distance = +4pt]above right:$u_3$},label={below:$3$}] (3) at (4,0) {};
    \node[fill,label={above:$u_4$},label={below:$4$}] (4) at (6,0) {};
    \node[fill,label={above:$u_5$},label={below:$2$}] (5) at (8,0) {};
    
    \node[fill,label={[label distance = +4pt]above right:$u'_3$},label={below left:$4$}] (3') at (4,2) {};
    \node[fill,label={above:$u''_3$},label={below left:$2$}] (3'') at (4,4) {};
\end{scope}

\begin{scope}[every edge/.style={draw=black}]
    \path (0) edge (5);
    \path (3) edge (3'');
\end{scope}
\end{tikzpicture}
\caption{\label{8}}
\end{subfigure}
\begin{subfigure}[b]{0.49\textwidth}
\centering
\begin{tikzpicture}[scale=0.6]{thick}
\begin{scope}[every node/.style={circle,draw,minimum size=1pt,inner sep=2}]			\node[fill,label={above:$u_0$},label={below:$2$}] (0) at (-2,0) {};
    \node[fill,label={above:$u_1$},label={below:$2$}] (1) at (0,0) {};
    \node[fill,label={above:$u_2$},label={below:$4$}] (2) at (2,0) {};
    \node[fill,label={[label distance = +4pt]above right:$u_3$},label={below:$3$}] (3) at (4,0) {};
    \node[fill,label={above:$u_4$},label={below:$4$}] (4) at (6,0) {};
    \node[fill,label={above:$u_5$},label={below:$2$}] (5) at (8,0) {};
    
    \node[fill,label={[label distance = +4pt]above right:$u'_3$},label={below left:$5$}] (3') at (4,2) {};
    \node[fill,label={[label distance = +4pt]above:$u''_3$},label={below left:$4$}] (3'') at (4,4) {};
    \node[fill,label={above:$v_3$},label={below left:$3$}] (v3) at (3,6) {};
    \node[fill,label={above:$v'_3$},label={below right:$2$}] (v'3) at (5,6) {};
\end{scope}

\begin{scope}[every edge/.style={draw=black}]
    \path (0) edge (5);
    \path (3) edge (3'');
    \path (3'') edge (v3);
    \path (3'') edge (v'3);
\end{scope}
\end{tikzpicture}
\caption{\label{9}}
\end{subfigure}
\begin{subfigure}[b]{0.49\textwidth}
\centering
\begin{tikzpicture}[scale=0.6]{thick}
\begin{scope}[every node/.style={circle,draw,minimum size=1pt,inner sep=2}]
	\node[fill,label={above:$u_0$},label={below:$2$}] (0) at (-2,0) {};
    \node[fill,label={above:$u_1$},label={below:$2$}] (1) at (0,0) {};
    \node[fill,label={above:$u_2$},label={below:$4$}] (2) at (2,0) {};
    \node[fill,label={[label distance = +4pt]above right:$u_3$},label={below:$3$}] (3) at (4,0) {};
    \node[fill,label={above:$u_4$},label={below:$5$}] (4) at (6,0) {};
    \node[fill,label={above:$u_5$},label={below:$4$}] (5) at (8,0) {};
    
    \node[fill,label={[label distance = +4pt]above right:$u'_3$},label={below left:$5$}] (3') at (4,2) {};
    \node[fill,label={[label distance = +4pt]above:$u''_3$},label={below left:$4$}] (3'') at (4,4) {};
    \node[fill,label={above:$v_3$},label={below left:$3$}] (v3) at (3,6) {};
    \node[fill,label={above:$v'_3$},label={below right:$2$}] (v'3) at (5,6) {};
    
    \node[fill,label={above:$v_5$},label={below:$3$}] (v5) at (10,1) {};
    \node[fill,label={above:$v'_5$},label={below:$2$}] (v'5) at (10,-1) {};
\end{scope}

\begin{scope}[every edge/.style={draw=black}]
    \path (0) edge (5);
    \path (3) edge (3'');
    \path (3'') edge (v3);
    \path (3'') edge (v'3);
    \path (5) edge (v5);
    \path (5) edge (v'5);
\end{scope}
\end{tikzpicture}
\caption{\label{10}}
\end{subfigure}
\caption{List-colorable graphs.}
\label{colorable figure}
\end{center}
\end{figure}

\begin{proof}
In the following proofs, whenever the size of a list $|L(u)|\geq i$, we assume that $|L(u)|=i$ by removing the extra colors from the list.

\begin{itemize}
\item[(i)] If $L(u_1)=L(u_2)$, then we color $u_3$ with a color in $L(u_3)\setminus L(u_2)$, followed by $u_4$, $u_2$, and $u_1$ in this order. If $L(u_1)\neq L(u_2)$, then we color $u_2$ with a color in $L(u_2)\setminus L(u_1)$, followed by $u_4$, $u_3$, and $u_1$ in this order.

\item[(ii)] First, we claim the following:
\begin{itemize}
\item $L(u_5)\cap L(u_1)=\emptyset$ and $L(u_5)\cap L(u_2)=\emptyset$. Suppose by contradiction that there exists $x\in L(u_5)\cap L(u_1)$, we color $u_1$ and $u_5$ with $x$, then $u_2$, $u_3$, $u''_3$, $u'_3$, and $u_4$ in this order. The same argument holds for $L(u_5)\cap L(u_2)$.
\item $L(u_5)\cap L(u'_3)=\emptyset$. Suppose by contradiction that there exists $x\in L(u_5)\cap L(u'_3)$, we color $u'_3$ and $u_5$ with $x$. Observe that $L(u_5)\cap L(u_1)=\emptyset$ and $L(u_5)\cap L(u_2)=\emptyset$. So, we color $u''_3$, $u_3$, $u_2$, $u_1$, and $u_4$ in this order. 
\item $L(u_5)\cap L(u_3)=\emptyset$. Otherwise, we color $u_3$ with $x\in L(u_5)\cap L(u_3)$. Observe that $L(u_5)\cap L(u_1)=\emptyset$, $L(u_5)\cap L(u_2)=\emptyset$, and $L(u_5)\cap L(u'_3)=\emptyset$. So, we color $u_5$, $u''_3$, $u_2$, $u_1$, $u_4$, and $u'_3$ in this order.
\end{itemize}
Since $L(u_5)\cap L(u_3)=\emptyset$, we color $u_2$, $u_1$, $u_3$, $u''_3$, $u'_3$, $u_4$, and $u_5$ in this order.

\item[(iii)] First, we claim that $L(v'_3)\subset L(v_3)$. Otherwise, we color $v'_3$ with $x\in L(v'_3)\setminus L(v_3)$. Then, we color everything else except $v_3$ thanks to \Cref{2}. We finish by coloring $v_3$.

Now, we color $u''_3$ with $x\in L(u''_3)\setminus L(v_3)$. Since $L(v'_3)\subset L(v_3)$, $x\notin L(v'_3)$. Thus, we color $u_2$, $u_1$, $u_3$, $u_5$, $u_4$, $u'_3$, $v'_3$, and $v_3$ in this order.

\item[(iv)] First, we claim that $L(v'_3)\subset L(v_3)$ and $L(v'_5)\subset L(v_5)$. Suppose by contradiction that there exists $x\in L(v'_5)\setminus L(v_5)$. We color $v'_5$ with $x$. Then, we color everything else except $v_5$ thanks to \Cref{3}. We finish by coloring $v_5$. Symmetrically, the same holds for $L(v'_3)\subset L(v_3)$.

Now, we color $u''_3$ with $x\in L(u''_3)\setminus L(v_3)$. Since $L(v'_3)\subset L(v_3)$, $x\notin L(v'_3)$. Similarly, we color $u_5$ with $y\in L(u_5)\setminus L(v_5)$. We finish by coloring $u_3$, $u_2$, $u_1$, $u'_3$, $v'_3$, $v_3$, $u_4$, $v'_5$, and $v_5$ in this order.

\item[(v)] First, we claim the following:
\begin{itemize}
\item $L(u_2)\cap L(u_5)=\emptyset$ and $L(u_2)\cap L(u''_3)=\emptyset$. Suppose by contradiction that there exists $x\in L(u_2)\cap L(u_5)$. We color $u_2$ and $u_5$ with $x$, then $u_1$, $u_3$, $u''_3$, $u'_3$, and $v_4$ in this order. Symmetrically, the same holds for $L(u_2)\cap L(u''_3)$. 
\item $L(u_3)\subset (L(u_5)\cup L(u''_3))$. Suppose by contradiction that there exists $x\in L(u_3)\setminus (L(u_5)\cup L(u''_3))$. We color $u_3$ with $x$, then $u_1$, $u_2$, $u'_3$, $u''_3$, $u_4$, then $u_5$ in this order.
\item $L(u'_3)\cap L(u_5)=\emptyset$ and $L(u_4)\cap L(u''_3)=\emptyset$. Suppose by contradiction that there exists $x\in L(u'_3)\cap L(u_5)$. We color $u'_3$ and $u_5$ with $x$. Observe that $x\notin L(u_2)$ since $L(u_2)\cap L(u_5)=\emptyset$. Thus, we color $u''_3$, $u_3$, $u_1$, $u_2$ and $u_4$ in this order. Symmetrically, the same holds for $L(u_4)\cap L(u''_3)$. 
\end{itemize}
Since $L(u_3)\subset (L(u_5)\cup L(u''_3))$, $|L(u_3)|=3$, and $|L(u_5)|=|L(u''_3)|=2$, there must exist $x\in L(u_5)\cap L(u_3)$. In addition, $x\notin L(u'_3)$ as $L(u'_3)\cap L(u_5)=\emptyset$. Thus, we color $u_3$ with $x$, then $u_1$, $u_2$, $u_5$, $u_4$, $u''_3$, and $u'_3$ in this order.

\item[(vi)] First, we claim that $L(v'_3)\subset L(v_3)$. Otherwise, we color $v'_3$ with $x\in L(v'_3)\setminus L(v_3)$. Then, we color everything else except $v_3$ thanks to \Cref{5}. We finish by coloring $v_3$.

Now, we color $u''_3$ with $x\in L(u''_3)\setminus L(v_3)$. Since $L(v'_3)\subset L(v_3)$, $x\notin L(v'_3)$. Thus, we color $u_1$, $u_3$, $u_2$, $u_5$, $u_4$, $u'_3$, $v'_3$, and $v_3$ in this order.

\item[(vii)] First, we claim that $L(v'_3)\subset L(v_3)$ and $L(v'_5)\subset L(v_5)$. Suppose by contradiction that there exists $x\in L(v'_5)\setminus L(v_5)$. We color $v'_5$ with $x$. Then, we color everything else except $v_5$ thanks to \Cref{6}. We finish by coloring $v_5$. Symmetrically, the same holds for $L(v'_3)\subset L(v_3)$.

Now, we color $u''_3$ with $x\in L(u''_3)\setminus L(v_3)$. Since $L(v'_3)\subset L(v_3)$, $x\notin L(v'_3)$. Similarly, we color $u_5$ with $y\in L(u_5)\setminus L(v_5)$. We finish by coloring $u_3$, $u_1$, $u_2$, $u'_3$, $v'_3$, $v_3$, $u_4$, $v'_5$, and $v_5$ in this order.

\item[(viii)] First, we claim that $L(u_0) = L(u_1)$. Otherwise, we color $u_0$ with $x\in L(u_0)\setminus L(u_1)$. Then, we color the rest thanks to \Cref{5}.

Since $L(u_0)=L(u_1)$, we can restrict $L(u_2)$ to $L'(u_2)=L(u_2)\setminus L(u_1)$ and observe that, if we can color everything (where $u_2$ has list $L'(u_2)$) except $u_0$ and $u_1$, then we can always finish by coloring $u_1$ and $u_0$ in this order.

Let us show that everything except $u_0$ and $u_1$ can be colored first with the new list $L'(u_2)$ for $u_2$. We claim the following:
\begin{itemize}
\item $L(u_5)\cap L'(u_2)=\emptyset$. Otherwise, we color $u_2$ and $u_5$ with $x\in L(u_5)\cap L'(u_2)$. Then, we color $u_3$, $u''_3$, $u'_3$, and $u_4$ in this order.
\item $L(u_5)\cap L(u'_3)=\emptyset$. Otherwise, we color $u'_3$ and $u_5$ with $x\in L(u_5)\cap L(u'_3)$. Observe that $L(u_5)\cap L'(u_2) =\emptyset$ so $x\notin L'(u_2)$. So, we color $u''_3$, $u_3$, $u_2$, and $u_4$ in this order.
\item $L(u_5)\cap L(u_3)=\emptyset$. Otherwise, we color $u_3$ with $x\in L(u_5)\cap L(u_3)$. Observe that $L(u_5)\cap L'(u_2) =\emptyset$ and $L(u_5)\cap L(u'_3)=\emptyset$ so $x\notin L'(u_2)\cup L(u'_3)$. So, we color $u_5$, $u''_3$, $u_2$, $u_4$, and $u'_3$ in this order.
\end{itemize}
Since $L(u_5)\cap L(u_3)=\emptyset$, we color $u_2$, $u_3$, $u''_3$, $u'_3$, $u_4$, and $u_5$ in this order.
 
\item[(ix)] First, we claim that $L(v'_3)\subset L(v_3)$. Otherwise, we color $v'_3$ with $x\in L(v'_3)\setminus L(v_3)$. Then, we color eveything else except $v_3$ thanks to \Cref{8}. We finish by coloring $v_3$.

Now, we color $u''_3$ with $x\in L(u''_3)\setminus L(v_3)$. Since $L(v'_3)\subset L(v_3)$, $x\notin L(v'_3)$. Thus, we color $u_0$, $u_1$, $u_3$, $u_2$, $u_5$, $u_4$, $u'_3$, $v'_3$, and $v_3$ in this order.

\item[(x)] First, we claim that $L(v'_3)\subset L(v_3)$ and $L(v'_5)\subset L(v_5)$. Suppose by contradiction that there exists $x\in L(v'_5)\setminus L(v_5)$. We color $v'_5$ with $x$. Then, we color everything else except $v_5$ thanks to \Cref{9}. We finish by coloring $v_5$. Symmetrically, the same holds for $L(v'_3)\subset L(v_3)$.

Now, we color $u''_3$ with $x\in L(u''_3)\setminus L(v_3)$. Since $L(v'_3)\subset L(v_3)$, $x\notin L(v'_3)$. Similarly, we color $u_5$ with $y\in L(u_5)\setminus (L(v_5)\cup L(v'_5))$. We finish by coloring $u_3$, $u_1$, $u_0$, $u_2$, $u'_3$, $v'_3$, $v_3$, $u_4$, $v'_5$, and $v_5$ in this order.

\end{itemize}
 
\end{proof}

\subsection{Structural properties of $G$\label{tutu}}

\begin{lemma}\label{connected}
Graph $G$ is connected.
\end{lemma}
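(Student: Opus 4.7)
The plan is the standard minimum-counterexample / induction-on-size argument. Assume, toward a contradiction, that $G$ is disconnected, and let $L$ be a $6$-list assignment witnessing that $G$ admits no $2$-distance list $6$-coloring. Pick any connected component $H$ of $G$; then $H$ is a proper subgraph, so $|V(H)|<|V(G)|$, and the hereditary hypotheses carry over: $\mad(H)\le\mad(G)<\tfrac{5}{2}$ and $g(H)\ge g(G)\ge 10$. The strategy is to $2$-distance list-color each component of $G$ from the restriction of $L$ and then concatenate these colorings; since vertices in distinct components are at distance $+\infty$ from one another, the concatenation is automatically a valid $2$-distance $L$-coloring of $G$, contradicting that $G$ is a counterexample.

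To color a given component $H$, I case-split on $\Delta(H)$. If $\Delta(H)=4$, then $H$ satisfies every hypothesis of \Cref{main theorem} and is strictly smaller than $G$, so by the minimality of $G$ the component $H$ admits a $2$-distance list $6$-coloring. If $\Delta(H)\le 3$, then $\Delta(H)+3\le 6$, and I would invoke previously known bounds on sparse graphs of small maximum degree: the entry in \Cref{recap table 2-distance} at girth $9$, column $\Delta+3$, due to Cranston \textit{et al.}, settles the case $\Delta(H)=3$, while for $\Delta(H)\le 2$ the trivial bound $\chi^2_l(H)\le\Delta(H)^2+1\le 5$ is immediate (noting that the only cycles allowed are of length at least $10$, so list-coloring paths and long cycles poses no issue).

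The only real obstacle I anticipate is the mismatch between the exact hypothesis $\Delta(G)=4$ of \Cref{main theorem} and the fact that components of $G$ may have strictly smaller maximum degree, so that minimality alone does not dispatch those components. This gap is closed by the small-$\Delta$ citations above, after which the concatenation step immediately yields the desired contradiction and forces $G$ to be connected.
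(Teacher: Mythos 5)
Your proof is correct and follows the same basic strategy as the paper's: take a minimal counterexample, color each connected component separately, and concatenate, since distinct components impose no 2-distance constraints on one another. The paper's own proof is the single sentence ``Otherwise a component of $G$ would be a smaller counterexample,'' which silently elides exactly the point you flagged: a component $H$ of $G$ need not have $\Delta(H)=4$, so minimality of $G$ with respect to \Cref{main theorem} does not by itself dispatch components of smaller maximum degree. Your patch for that gap is sound: the case $\Delta(H)=3$ is covered by the subcubic girth-$9$ result of Cranston and Kim cited in \Cref{recap table 2-distance}, and for $\Delta(H)\le 2$ every vertex has at most $\Delta(H)^2\le 4$ vertices within distance two, so a greedy argument colors $H$ from lists of size $6$. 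So your write-up is, if anything, more complete than the paper's; the only cosmetic remark is that the paper intends the one-liner to carry all of this implicitly, and a referee would likely accept either version.
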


\begin{proof}
Otherwise a component of $G$ would be a smaller counterexample.
\end{proof}

\begin{lemma}\label{minimumDegree}
The minimum degree of $G$ is at least 2.
\end{lemma}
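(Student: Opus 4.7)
The plan is to argue by contradiction using the minimality of $G$. Suppose $G$ contains a vertex $v$ with $d(v)\leq 1$. I would first rule out $d(v)=0$: by \Cref{connected}, $G$ is connected, and since $\Delta(G)=4$ forces $|V(G)|\geq 5$, no vertex can be isolated. Hence $d(v)=1$; let $u$ denote its unique neighbor.

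Next, I consider $G'=G-v$. Removing a vertex cannot increase the maximum average degree or decrease the girth, so $\mad(G')<\frac{5}{2}$ and $g(G')\geq 10$; also $\Delta(G')\leq \Delta(G)=4$. Since $|V(G')|<|V(G)|$, minimality of $G$ yields a $2$-distance list $L$-coloring $\varphi$ of $G'$ (this is immediate when $\Delta(G')=4$; in the residual case $\Delta(G')<4$, which can only occur if $u$ is the unique $4$-vertex of $G$, one runs the same inductive argument under the mildly stronger hypothesis $\Delta\leq 4$, or attaches a short pendant path elsewhere in $G'$ to artificially restore $\Delta=4$ without affecting sparseness or girth).

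To finish, I extend $\varphi$ to $v$. The vertex $v$ sees only $u$ and the at most $d(u)-1\leq 3$ other neighbors of $u$, so at most $4$ colors of $L(v)$ are forbidden. Because $|L(v)|\geq 6$, an available color remains, and assigning it to $v$ produces a $2$-distance list $L$-coloring of $G$, contradicting the assumption that $G$ is a counterexample.

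The only genuine subtlety is the bookkeeping when $\Delta(G')<4$; apart from that routine edge case, the argument is a textbook ``delete-and-extend'' reducibility step of the kind that will be reused extensively in the configuration analysis of \Cref{tutu}.
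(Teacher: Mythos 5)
Your proof is correct and follows essentially the same route as the paper: rule out isolated vertices via \Cref{connected} and $\Delta(G)=4$, delete the degree-$1$ vertex, color the rest by minimality, and extend since $v$ sees at most $4$ vertices while $|L(v)|\geq 6$. The $\Delta(G')<4$ edge case you flag is real but is passed over silently in the paper's own one-line proof as well, so your treatment is if anything slightly more careful.
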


\begin{proof}
By \Cref{connected}, the minimum degree is at least 1 or $G$ would be a single isolated vertex contradicting $\Delta(G)=4$. If $G$ contains a degree 1 vertex $v$, then we can simply remove $v$ and 2-distance color the resulting graph, which is possible by minimality of $G$. Then, we add $v$ back and extend the coloring (at most $4$ constraints and $6$ colors).
\end{proof}

\begin{figure}[H]
\begin{subfigure}[b]{0.49\textwidth}
\centering
\begin{tikzpicture}[scale=0.6]{thick}
\begin{scope}[every node/.style={circle,draw,minimum size=1pt,inner sep=2}]
	\node (0) at (-2,0) {};
    \node[fill,label={above:$u$},label={below:$2$}] (1) at (0,0) {};
    \node[fill,label={above:$v$},label={below:$4$}] (2) at (2,0) {};
    \node[fill,label={above:$w$},label={below:$2$}] (3) at (4,0) {};
    \node (4) at (6,0) {};
\end{scope}

\begin{scope}[every edge/.style={draw=black}]
    \path (0) edge (4);
\end{scope}
\end{tikzpicture}
\caption{\label{subfig:3-path}A $3^+$-path.}
\end{subfigure}
\begin{subfigure}[b]{0.49\textwidth}
\centering
\begin{tikzpicture}[scale=0.6]{thick}
\begin{scope}[every node/.style={circle,draw,minimum size=1pt,inner sep=2}]
	\node[label={above:$u$}] (0) at (-2,0) {3};
    \node[fill,label={above:$v$},label={below:$2$}] (1) at (0,0) {};
    \node[fill,label={above:$w$},label={below:$1$}] (2) at (2,0) {};
    \node[label={above:$x$}] (3) at (4,0) {};
\end{scope}

\begin{scope}[every edge/.style={draw=black}]
    \path (0) edge (3);
\end{scope}
\end{tikzpicture}
\caption{\label{subfig:2-path}A $2$-path incident to a $3$-vertex.}
\end{subfigure}
\caption{Path cases.}
\end{figure}
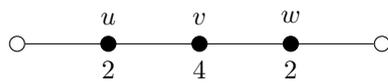
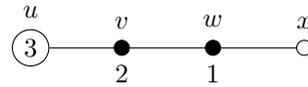

\begin{lemma}\label{3-path lemma}
Graph $G$ does not contain any $3^+$-path.
\end{lemma}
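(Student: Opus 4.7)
The plan is to argue by contradiction using the minimality of $G$. Assume $G$ contains a $3^+$-path and pick three consecutive $2$-vertices $u, v, w$ along it. Let $x$ be the neighbor of $u$ other than $v$ and $y$ the neighbor of $w$ other than $v$. The girth bound $g(G) \geq 10$ rules out short cycles through $x, u, v, w, y$, so these five vertices are pairwise distinct and the ``second-level'' neighbors we count below do not collide.

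Set $H = G - \{u, v, w\}$. Since $H$ is a subgraph of $G$ we have $\mad(H) \leq \mad(G) < \frac{5}{2}$, and clearly $g(H) \geq g(G) \geq 10$ and $\Delta(H) \leq \Delta(G) = 4$; hence by minimality of $G$, the graph $H$ admits a $2$-distance list-coloring from any assignment of $6$-element lists, and it remains to extend this to $u$, $v$, and $w$.

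The colored $2$-distance neighbors of $u$ lie in $\{x\} \cup (N(x) \setminus \{u\})$, a set of size at most $\Delta = 4$, hence $|L(u)| \geq 2$; symmetrically $|L(w)| \geq 2$. The colored $2$-distance neighbors of $v$ are only $x$ and $y$, so $|L(v)| \geq 4$. Since $u, v, w$ are pairwise within distance $2$ they must receive pairwise distinct colors, i.e.\ we must list-color a triangle in the square graph with list sizes at least $2, 4, 2$: this is trivial by the greedy order $u, w, v$. The extended coloring contradicts the choice of $G$ as a counterexample.

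I do not anticipate any genuine obstacle here: this is the simplest reducibility configuration in the paper and does not even need to invoke \Cref{colorable lemma}. The only point to verify is that the at-most-$4$ colored $2$-distance neighbors of $u$ are genuinely distinct from each other and from $v, w$, which is an immediate consequence of $g(G) \geq 10$.
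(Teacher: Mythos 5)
Your proof is correct and follows essentially the same route as the paper: delete the three consecutive $2$-vertices, color the rest by minimality, and extend greedily in the order $u$, $w$, $v$ using the residual list sizes $2$, $4$, $2$. The only difference is that you spell out the counting of colored $2$-distance neighbors explicitly, which the paper leaves implicit.
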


\begin{proof}
Suppose by contradiction that $G$ does contain three consecutives $2$-vertices $uvw$ (see \Cref{subfig:3-path}). It suffices to color $G-\{u,v,w\}$ by minimality of $G$, then we can extend the coloring to the remaining vertices by coloring $u$, $w$, then $v$ in this order. This is possible since $u$, $v$, and $w$ have respectively at least 2, 4, and 2 colors left available.
\end{proof}

\begin{lemma}\label{2-path lemma}
An endvertex of a $2$-path must be a $4$-vertex.
\end{lemma}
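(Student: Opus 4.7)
The plan is to argue by contradiction. Suppose a $2$-path $u, v, w, x$ in $G$ has an endvertex $u$ of degree $3$ (the other endvertex $x$ is a $3^+$-vertex, and $v, w$ are the two internal $2$-vertices). The strategy is to remove the internal pair $\{v, w\}$, invoke minimality on $G' = G - \{v, w\}$, and then extend the resulting coloring to $v$ and $w$ greedily.

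First I would apply the minimality of $G$ to $G'$: since $\mathrm{mad}(G') \leq \mathrm{mad}(G) < \tfrac{5}{2}$ and $g(G') \geq g(G) \geq 10$, the graph $G'$ admits a $2$-distance list $6$-coloring. I would then compute the lists available at $v$ and $w$ in $G$ after this partial coloring. Because $g(G) \geq 10$, any collision inside a $2$-distance neighborhood would produce a cycle of length at most $5$, so the sets $N^*_G(v) = \{u, w, x\} \cup (N(u) \setminus \{v\})$ and $N^*_G(w) = \{u, v, x\} \cup (N(x) \setminus \{w\})$ each consist of pairwise distinct vertices. Using $d(u) = 3$ and $d(x) \leq 4$, this gives $|N^*_G(v)| = 5$ and $|N^*_G(w)| \leq 6$. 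Since the only uncolored vertex in $N^*_G(v)$ is $w$, and likewise the only uncolored vertex in $N^*_G(w)$ is $v$, the available lists satisfy $|L(v)| \geq 6 - 4 = 2$ and $|L(w)| \geq 6 - 5 = 1$.

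The last step is a one-line greedy extension: color $w$ with any color in $L(w)$, then color $v$ with any color in $L(v) \setminus \{c(w)\}$, which is nonempty because $|L(v)| \geq 2$. This produces a $2$-distance list $6$-coloring of $G$, contradicting the assumption that $G$ is a minimum counterexample. Hence the endvertex $u$ cannot have degree $3$, and must be a $4$-vertex.

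There is no genuine obstacle in this proof; the argument is just a budget check, and no configuration from \Cref{colorable lemma} is needed. The only piece of bookkeeping is verifying that the $2$-distance neighborhoods of $v$ and $w$ have no collisions, which the girth hypothesis handles comfortably.
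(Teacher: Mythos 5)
Your proposal is correct and matches the paper's own proof: both delete the two internal $2$-vertices $v,w$, color $G-\{v,w\}$ by minimality, and then observe that $w$ and $v$ retain at least $1$ and $2$ available colors respectively, so coloring $w$ before $v$ completes the extension. The only difference is that you spell out the $2$-distance neighborhood counts explicitly, which the paper leaves implicit.
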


\begin{proof}
Suppose by contradiction that there exists a $2$-path $uvwx$ where $d(u)=3$ (see \Cref{subfig:2-path}). We color $G-\{v,w\}$ by minimality of $G$. Then, we color $w$ and $v$ in this order since they have respectively at least 1 and 2 colors left available.
\end{proof}

Let us define some nomenclatures.
\begin{definition}
Let $u$ be a $(1,1,1)$-vertex and let $v$, $w$, and $x$ be the other endvertices of the $1$-paths incident to $u$. We call $u$
\begin{itemize}
\item a \emph{small} $(1,1,1)$-vertex, if $v$, $w$, and $x$ are all $3$-vertices.
\item a \emph{medium} $(1,1,1)$-vertex, if exactly one of $v$, $w$, and $x$ is a $4$-vertex.
\item a \emph{large} $(1,1,1)$-vertex, if exactly two of $v$, $w$, and $x$ are $4$-vertices.
\item a \emph{huge} $(1,1,1)$-vertex, if $v$, $w$, and $x$ are all $4$-vertices.
\end{itemize}
\end{definition}

\begin{definition} \label{special 110}
Let $u$ be a $(1,1,0)$-vertex with a $3$-neighbor and let $u$ share a common $2$-neighbor with a small $(1,1,1)$-vertex. We call $u$ a \emph{special $(1,1,0)$-vertex}.
\end{definition}

\begin{definition} \label{light}
We call $u$ a \emph{light} vertex if $u$ is a 2-vertex, a medium $(1,1,1)$-vertex, or a large $(1,1,1)$-vertex.
\end{definition}

\begin{figure}[H]
\begin{subfigure}[b]{0.32\textwidth}
\centering
\begin{tikzpicture}[scale=0.6]{thick}
\begin{scope}[every node/.style={circle,draw,minimum size=1pt,inner sep=2}]
    \node (1) at (0,0) {};
    \node[fill,label={below:$u$}] (2) at (2,0) {};
    \node (3) at (4,0) {};
\end{scope}

\begin{scope}[every edge/.style={draw=black}]
    \path (1) edge (3);
\end{scope}
\end{tikzpicture}
\caption{A $2$-vertex.}
\end{subfigure}
\begin{subfigure}[b]{0.32\textwidth}
\centering
\begin{tikzpicture}[scale=0.6]{thick}
\begin{scope}[every node/.style={circle,draw,minimum size=1pt,inner sep=2}]
	\node (0) at (-2,0) {3};
    \node[fill] (1) at (0,0) {};
    \node[fill,label={below:$u$}] (2) at (2,0) {};
    \node[fill] (3) at (4,0) {};
    \node (4) at (6,0) {3};
    
    \node[fill] (2') at (2,2) {};
    \node (2'') at (2,4) {4};
\end{scope}

\begin{scope}[every edge/.style={draw=black}]
    \path (0) edge (4);
    \path (2) edge (2'');
\end{scope}
\end{tikzpicture}
\caption{A medium $(1,1,1)$-vertex.}
\end{subfigure}
\begin{subfigure}[b]{0.32\textwidth}
\centering
\begin{tikzpicture}[scale=0.6]{thick}
\begin{scope}[every node/.style={circle,draw,minimum size=1pt,inner sep=2}]
	\node (0) at (-2,0) {4};
    \node[fill] (1) at (0,0) {};
    \node[fill,label={below:$u$}] (2) at (2,0) {};
    \node[fill] (3) at (4,0) {};
    \node (4) at (6,0) {4};
    
    \node[fill] (2') at (2,2) {};
    \node (2'') at (2,4) {3};
\end{scope}

\begin{scope}[every edge/.style={draw=black}]
    \path (0) edge (4);
    \path (2) edge (2'');
\end{scope}
\end{tikzpicture}
\caption{A large $(1,1,1)$-vertex.}
\end{subfigure}
\caption{Light vertices.}
\end{figure}
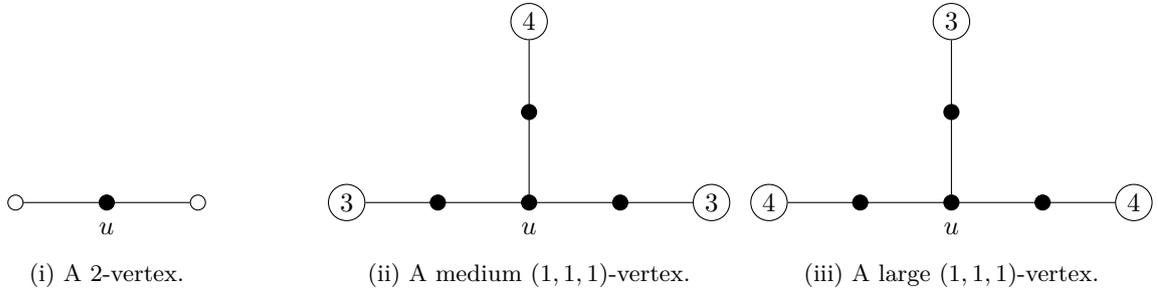

\begin{figure}[H]
\begin{subfigure}[b]{0.49\textwidth}
\centering
\begin{tikzpicture}[scale=0.6]{thick}
\begin{scope}[every node/.style={circle,draw,minimum size=1pt,inner sep=2}]
	\node (0) at (-2,0) {3};
    \node[fill] (1) at (0,0) {};
    \node[fill,label={below:$u$}] (2) at (2,0) {};
    \node[fill] (3) at (4,0) {};
    \node (4) at (6,0) {3};
    
    \node[fill] (2') at (2,2) {};
    \node (2'') at (2,4) {3};
\end{scope}

\begin{scope}[every edge/.style={draw=black}]
    \path (0) edge (4);
    \path (2) edge (2'');
\end{scope}
\end{tikzpicture}
\caption{A small $(1,1,1)$-vertex.}
\end{subfigure}
\begin{subfigure}[b]{0.49\textwidth}
\centering
\begin{tikzpicture}[scale=0.6]{thick}
\begin{scope}[every node/.style={circle,draw,minimum size=1pt,inner sep=2}]
	\node (0) at (-2,0) {4};
    \node[fill] (1) at (0,0) {};
    \node[fill,label={below:$u$}] (2) at (2,0) {};
    \node[fill] (3) at (4,0) {};
    \node (4) at (6,0) {4};
    
    \node[fill] (2') at (2,2) {};
    \node (2'') at (2,4) {4};
\end{scope}

\begin{scope}[every edge/.style={draw=black}]
    \path (0) edge (4);
    \path (2) edge (2'');
\end{scope}
\end{tikzpicture}
\caption{A huge $(1,1,1)$-vertex.}
\end{subfigure}
\begin{subfigure}[b]{\textwidth}
\centering
\begin{tikzpicture}[scale=0.6]{thick}
\begin{scope}[every node/.style={circle,draw,minimum size=1pt,inner sep=2}]
	\node (0) at (-2,0) {};
    \node[fill] (1) at (0,0) {};
    \node[fill,label={below:$u$}] (2) at (2,0) {};
    \node[fill] (3) at (4,0) {};
	\node[fill] (5) at (6,0) {};
	\node[fill] (6) at (8,0) {};
	\node (7) at (10,0) {3};    
    
    \node (2') at (2,2) {3};
    
    \node[fill] (5') at (6,2) {};
    \node (5'') at (6,4) {3};
\end{scope}

\begin{scope}[every edge/.style={draw=black}]
    \path (0) edge (7);
    \path (2) edge (2');
    \path (5) edge (5'');
\end{scope}
\end{tikzpicture}
\caption{A special $(1,1,0)$-vertex.}
\end{subfigure}
\caption{}
\end{figure}

\begin{lemma}\label{3-111-111}
If two $(1,1,1)$-vertices share a common $2$-neighbor, then they must be large $(1,1,1)$-vertices.
\end{lemma}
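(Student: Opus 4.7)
The plan is to argue by contradiction. Suppose two $(1,1,1)$-vertices $u$ and $u'$ share a common $2$-neighbor $v$, but at least one of them, say $u$, is not large. Let $a_1, a_2, v$ be the three $2$-neighbors of $u$ and let $A_1, A_2, u'$ be the opposite endvertices along the three corresponding $1$-paths of $u$. Since $u'$ is itself a $3$-vertex, $u$ cannot be huge, hence $u$ is small or medium; in either case at least one of $A_1, A_2$ must be a $3$-vertex, which (after relabeling if needed) I assume is $A_1$.

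The reduction I have in mind is to delete $S = \{a_1, u, v, u'\}$ and to $2$-distance list-color $G - S$ by the minimality of $G$. The girth condition $g(G) \geq 10$ forbids any chord among the vertices of $S$, so $G[S]$ is exactly the path $a_1 - u - v - u'$, which matches the shape of \Cref{1} in \Cref{colorable lemma}. The remaining task is then to check that, once $G - S$ is colored, the lists remaining on $S$ satisfy the $(2, 2, 3, 2)$ requirement of \Cref{1}.

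This is a routine count of $2$-distance neighborhoods. For $u$ the colored $2$-distance neighbors are $\{a_2, A_1, A_2\}$, so $|L(u)| \geq 3$; for $v$ they are $\{a_2, b_1, b_2\}$, where $b_1, b_2$ are the two other $2$-neighbors of $u'$, so $|L(v)| \geq 3$; for $u'$ they are $\{b_1, b_2, B_1, B_2\}$, where $B_1, B_2$ are the opposite endvertices along $u'$'s two other $1$-paths, so $|L(u')| \geq 2$; and for $a_1$ they are $\{a_2, A_1\}$ together with the two other neighbors of $A_1$, so $|L(a_1)| \geq 2$. It is in this last count that the hypothesis \emph{$u$ not large} is crucial: if $A_1$ were a $4$-vertex, then $a_1$ would see one extra colored neighbor, leaving only $|L(a_1)| \geq 1$, which is insufficient for \Cref{1}.

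Thus the main obstacle is simply identifying the correct vertex $A_1$ on which to focus; once this is done, \Cref{1} applies directly and produces a coloring that extends to $G$, contradicting the choice of $G$. The girth hypothesis $g \geq 10$ is used silently throughout to guarantee that all the listed vertices are pairwise distinct and that no extra $2$-distance conflict arises among the four vertices of $S$.
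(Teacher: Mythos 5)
Your proposal is correct and takes essentially the same route as the paper: both reduce to the four-vertex path formed by a $2$-neighbor of the non-large $(1,1,1)$-vertex whose far endvertex is a $3$-vertex, the vertex itself, the shared $2$-neighbor, and the second $(1,1,1)$-vertex, and both conclude via the $(2,2,3,2)$ configuration of \Cref{1}. The only cosmetic difference is that the paper also deletes the third $2$-neighbor and recolors it greedily first, whereas you leave it colored; the list counts and the use of the girth hypothesis are identical.
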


\begin{proof}
Let $uu_1v$ be a 1-path where both $u$ and $v$ are $(1,1,1)$-vertices. Let $u_2$ and $u_3$ be $u$'s other $2$-neighbors and $v_1$ and $v_2$ be $v$'s other $2$-neighbors. Let $w$ be the other endvertex of $wu_2u$. Since $g(G)\geq 10$, all named vertices are distinct. See \Cref{fig:lemma17}.

Suppose by contradiction that $d(w)=3$, in other words, that $u$ is not a large $(1,1,1)$-vertex. We color $G-\{u,u_1,u_2,u_3\}$ by minimality of $G$ and uncolor $v$. Then, we color $u_3$ and finish with $u_2$, $u$, $u_1$, and $v$ thanks to \Cref{1}.
\end{proof}

\begin{lemma}\label{3-10(-3)1-111-3}
A special $(1,1,0)$-vertex must share a $2$-neighbor with a $4$-vertex.
\end{lemma}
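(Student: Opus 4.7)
The plan is to argue by contradiction, using the minimality of $G$ together with the list-colorability result of \Cref{colorable lemma}\ref{1}. Suppose $u$ is a special $(1,1,0)$-vertex such that neither of its two $2$-neighbors is shared with a $4$-vertex. Label $u$'s three neighbors as $m$ and $n$ (the two $2$-vertices internal to $1$-paths incident to $u$) and $p$ (the direct $3$-vertex neighbor via the $0$-path). By the definition of special, the $1$-path through $m$ terminates at the small $(1,1,1)$-vertex $s$; let the $1$-path through $n$ terminate at a vertex $t$. Since $1$-path endpoints are $3^+$-vertices, the contradiction hypothesis forces $t$ to be a $3$-vertex. Because $s$ is small, the other two $1$-paths from $s$ terminate at $3$-vertices $s_1, s_2$ via $2$-vertices $m', m''$. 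Let $p_1, p_2$ be the other neighbors of $p$ and $t_1, t_2$ the other neighbors of $t$. All named vertices are pairwise distinct, because any coincidence would produce a cycle of length at most $9$, contradicting $g(G) \geq 10$.

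I would then set $X = \{n, u, m, s\}$ and, by the minimality of $G$, take a $2$-distance list $6$-coloring of $G - X$. The remaining task is to extend it to $X$. Counting, for each $x \in X$, the vertices of $N^*_G(x)$ that are colored (that is, not in $X$), one obtains $N^*_G(n) \setminus X = \{t, p, t_1, t_2\}$, $N^*_G(u) \setminus X = \{p, t, p_1, p_2\}$, $N^*_G(m) \setminus X = \{p, m', m''\}$, and $N^*_G(s) \setminus X = \{m', m'', s_1, s_2\}$. Hence the available lists satisfy $|L(n)| \geq 2$, $|L(u)| \geq 2$, $|L(m)| \geq 3$, and $|L(s)| \geq 2$.

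Next I would verify the conflict structure on $X$: every pair is at distance at most $2$ in $G$ except $\{n, s\}$, which lies at distance $3$ along the path $n\text{-}u\text{-}m\text{-}s$. This conflict graph is $K_4$ minus an edge and coincides with the conflict graph of Figure \ref{1} under the bijection $n \mapsto u_1$, $u \mapsto u_2$, $m \mapsto u_3$, $s \mapsto u_4$; moreover the list sizes $(2,2,3,2)$ just computed meet the hypotheses of \Cref{colorable lemma}\ref{1}. Applying that lemma extends the coloring of $G - X$ to all of $G$, contradicting that $G$ is a counterexample.

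The only delicate step is the bookkeeping of the $2$-distance neighborhoods needed to justify the list-size lower bounds; the girth-$10$ assumption is essential, since it is precisely what ensures that the candidate vertices listed above are genuinely distinct and that no further colored vertex contributes to the forbidden sets. Once the list sizes are in hand, the reduction is a routine invocation of the previously established list-coloring lemma.
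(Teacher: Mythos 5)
Your reduction is essentially the paper's — same vertex set, same correspondence $n\mapsto u_1$, $u\mapsto u_2$, $m\mapsto u_3$, $s\mapsto u_4$ onto \Cref{1}, and the same list-size bookkeeping — but there is one genuine flaw in how you invoke minimality. You delete all of $X=\{n,u,m,s\}$ and color $G-X$. Since $s$ is a small $(1,1,1)$-vertex, it has two further $2$-neighbors $m'$ and $m''$ (the internal vertices of its other two $1$-paths), and these remain in $G-X$. In $G$ they are at distance $2$ from each other (via $s$), but once $s$ is deleted they are at distance at least $3$ in $G-X$: they cannot be adjacent or share another common neighbor without creating a cycle of length at most $4$, contradicting $g(G)\geq 10$. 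Hence the $2$-distance coloring of $G-X$ obtained by minimality may assign $m'$ and $m''$ the \emph{same} color. No choice of colors on $X$ can repair this, so the final coloring of $G$ can fail to be a $2$-distance coloring even after your extension step succeeds. Your claim that applying \Cref{1} "extends the coloring of $G-X$ to all of $G$" is therefore not justified.

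The fix is exactly what the paper does: delete only $\{n,u,m\}$, color $G-\{n,u,m\}$ by minimality (so $m'$ and $m''$ are still forced to receive distinct colors, being at distance $2$ via the still-present $s$), and then \emph{uncolor} $s$ before extending. All of your list-size computations and the conflict-graph analysis go through unchanged under this modification, since $s$ being uncolored rather than deleted does not alter the lists of $n$, $u$, $m$, or $s$ itself. With that single correction your argument coincides with the paper's proof.
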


\begin{proof}
Let $u_2u_3u_4$ be a 1-path where $u_2$ is a special $(1,1,0)$-vertex and $u_4$ is a small $(1,1,1)$-vertex. Let $u_1\neq u_3$ be $u_2$'s other $2$-neighbor. See \Cref{fig:lemma18}. Suppose by contradiction that $u_1$ is adjacent to another $3$-vertex. Since $g(G)\geq 10$, all vertices that see $u_3$ are distinct. We color $G-\{u_1,u_2,u_3\}$ by minimality of $G$ and we uncolor $u_4$. We extend the coloring to $u_1$, $u_2$, $u_3$, and $u_4$ thanks to \Cref{1}.
\end{proof}

\begin{figure}[H]
\begin{subfigure}[b]{0.49\textwidth}
\centering
\begin{tikzpicture}[scale=0.6]{thick}
\begin{scope}[every node/.style={circle,draw,minimum size=1pt,inner sep=2}]
	\node (0) at (-2,0) {};
    \node[fill,label={below:$u_3$}] (1) at (0,0) {};
    \node[fill,label={below:$u$}] (2) at (2,0) {};
    \node[fill,label={below:$u_1$}] (3) at (4,0) {};
	\node[fill,label={below:$v$}] (5) at (6,0) {};
	\node[fill,label={below:$v_2$}] (6) at (8,0) {};
	\node (7) at (10,0) {};    
    
    \node[fill,label={left:$u_2$}] (2') at (2,2) {};
    \node[label={above:$w$}] (2'') at (2,4) {};
    
    \node[fill,label={left:$v_1$}] (5') at (6,2) {};
    \node (5'') at (6,4) {};
\end{scope}

\begin{scope}[every edge/.style={draw=black}]
    \path (0) edge (7);
    \path (2) edge (2'');
    \path (5) edge (5'');
\end{scope}
\end{tikzpicture}
\caption{\label{fig:lemma17}Large $3$-vertices case.}
\end{subfigure}
\begin{subfigure}[b]{0.49\textwidth}
\centering
\begin{tikzpicture}[scale=0.6]{thick}
\begin{scope}[every node/.style={circle,draw,minimum size=1pt,inner sep=2}]
	\node (0) at (-2,0) {};
    \node[fill,label={below:$u_1$}] (1) at (0,0) {};
    \node[fill,label={below:$u_2$}] (2) at (2,0) {};
    \node[fill,label={below:$u_3$}] (3) at (4,0) {};
	\node[fill,label={below:$u_4$}] (5) at (6,0) {};
	\node[fill] (6) at (8,0) {};
	\node (7) at (10,0) {3};    
    
    \node (2') at (2,2) {3};
    
    \node[fill] (5') at (6,2) {};
    \node (5'') at (6,4) {3};
\end{scope}

\begin{scope}[every edge/.style={draw=black}]
    \path (0) edge (7);
    \path (2) edge (2');
    \path (5) edge (5'');
\end{scope}
\end{tikzpicture}
\caption{\label{fig:lemma18}Special $(1,1,0)$-vertex case.}
\end{subfigure}
\caption{$3$-vertices case.}
\end{figure}
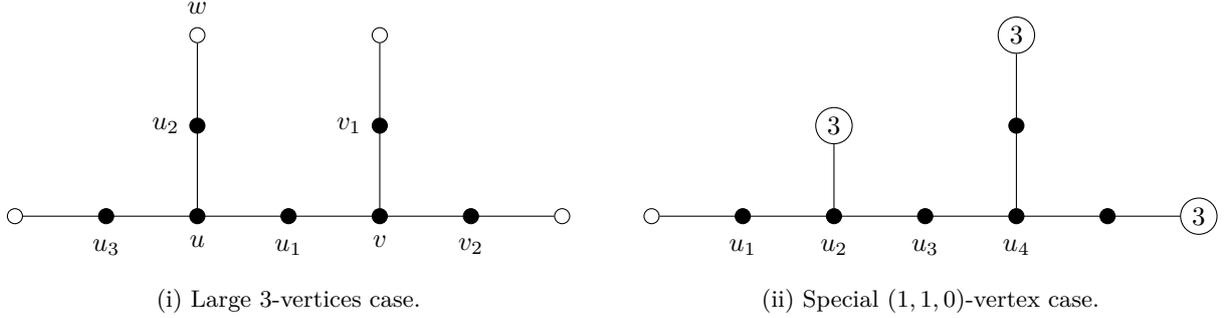

\begin{lemma}\label{4-vertex lemma}
Let $u$ be incident to four $1^+$-paths $uu_iv_i$ for $1\leq i\leq 4$. If $v_1$ and $v_2$ are light vertices, then $d(v_3)\geq 4$ and $d(v_4)\geq 4$.
\end{lemma}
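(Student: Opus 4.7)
The plan is to argue by contradiction. Suppose, by the symmetry between $v_3$ and $v_4$, that $d(v_3) \leq 3$; since $v_3$ is the endvertex of a $1^+$-path, it is a $3^+$-vertex, and so $d(v_3)=3$. I would then delete a carefully chosen set $S\subseteq V(G)$ around $u$, use the minimality of $G$ to $2$-distance list-color $G-S$, and recognize the uncolored picture as one of the configurations treated in \Cref{colorable lemma}; that extension would yield an $L$-coloring of $G$, contradicting the choice of $G$.

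The set $S$ always contains $\{u,u_1,u_2,u_3,u_4\}$, and, whenever $v_i$ ($i\in\{1,2\}$) is a $(1,1,1)$-vertex, one internal $2$-vertex of a $1$-path at $v_i$ whose far endvertex is a $3$-vertex. Counting colored $2$-distance neighbors in $G-S$: $u$ sees only the four colored $v_j$'s, so $|L(u)|\geq 2$; $u_3$ sees $v_3$ and its two other neighbors, so $|L(u_3)|\geq 3$; $u_4$ sees $v_4$ and at most $d(v_4)-1\leq 3$ further neighbors, so $|L(u_4)|\geq 2$; and for $i\in\{1,2\}$ the lightness of $v_i$ (helped by the extra removal in the $(1,1,1)$-case) ensures $u_i$ sees at most two colored neighbors, giving $|L(u_i)|\geq 4$. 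The auxiliary uncolored $2$-vertex, when it lies in $S$, also has a list of size $\geq 2$. These residual list sizes match precisely one of the configurations \Cref{5}--\Cref{10}; the corresponding part of \Cref{colorable lemma} then extends the partial coloring to $V(G)$.

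The main obstacle is the sub-case bookkeeping: with three possible types for each of $v_1$ and $v_2$ (2-vertex, medium $(1,1,1)$, large $(1,1,1)$), together with the two possible values of $d(v_4)\in\{3,4\}$, there are several sub-configurations, though they collapse substantially by $v_1 \leftrightarrow v_2$ symmetry and by the way the catalogue \Cref{colorable figure} was built in advance. The girth hypothesis $g(G)\geq 10$ is invoked throughout, in the style of \Cref{3-111-111} and \Cref{3-10(-3)1-111-3}, to guarantee that all the vertices appearing in the count are distinct, so that the list bounds above really hold and the desired reduction goes through.
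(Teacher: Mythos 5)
Your reduction keeps $v_1$ and $v_2$ colored, and that is where the argument breaks. The five vertices $u,u_1,u_2,u_3,u_4$ are pairwise at distance at most two in $G$, so in any $2$-distance coloring they must all receive distinct colors; with the guaranteed residual list sizes you compute, namely $2,4,4,3,2$, this can be impossible: take $L(u)=L(u_4)=\{1,2\}$, $L(u_3)=\{1,2,3\}$, $L(u_1)=L(u_2)=\{1,2,3,4\}$, whose union contains only four colors for five mutually conflicting vertices. Nothing in your set-up excludes such residual lists, so the extension step can fail. Two smaller slips compound this: the auxiliary $2$-vertex $t$ you delete next to a $(1,1,1)$-vertex $v_i$ has six $2$-distance neighbours of which only $u_1$ is uncolored, so it is guaranteed just $1$ remaining color, not $2$; and \Cref{5}--\Cref{10} are the configurations with a degree-$3$ centre, used for \Cref{3-4-vertex lemma}, so your picture could not match them in any case.

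The missing idea is precisely the deletion of $v_1$ and $v_2$ themselves --- and, when $v_i$ is a medium or large $(1,1,1)$-vertex, of all of $N_G(v_i)$ as well. Since $v_1$ and $v_2$ are $2$-distance neighbours of $u$ (through $u_1$ and $u_2$), removing them raises $|L(u)|$ to $4$ and $|L(u_1)|,|L(u_2)|$ to $5$, at the price of having to color $v_1,v_2$ (each with $2$ colors left) and their deleted neighbourhoods. The resulting configurations are exactly \Cref{2}, \Cref{3} and \Cref{4} of \Cref{colorable lemma}, and that is the route the paper takes. Any repair of your variant would have to add $v_1,v_2$ to the deleted set, which essentially reproduces the paper's proof.
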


\begin{proof}
Suppose by contradiction that $v_1$ and $v_2$ are light vertices but $d(v_3)\leq 3$. (see \Cref{fig:lemma19}). The proof will proceed as follows. For each combination of light vertices $v_1$ and $v_2$, we will define $H$ a subgraph of $G$. We color $G-H$ by minimality of $G$. Then, let $L(x)$ be the list of remaining colors for every $x\in V(H)$. We will use \Cref{colorable figure} to show that $H$ is always colorable, thus obtaining a valid coloring $G$, which is a contradiction.
Observe that $g(G)\geq 10$ so $g(H)\geq 10$, which means that, in the following subgraphs, every considered vertex will be distinct and their neighborhood at distance at most 2 will be represented exactly by the subgraphs in \Cref{colorable figure}. 

\begin{itemize}
\item If $v_1$ and $v_2$ are $2$-vertices, then $H=\{u,u_1,u_2,u_3,u_4,v_1,v_2\}$ is colorable thanks to \Cref{2}.
\item If $v_1$ is a $2$-vertex and $v_2$ is a medium or large $(1,1,1)$-vertex, then $H=\{u,u_1,u_2,u_3,u_4,v_1,v_2\}\cup N_G(v_2)$ is colorable thanks to \Cref{3}.
\item If $v_1$ and $v_2$ are medium or large $(1,1,1)$-vertices, then $H=\{u,u_1,u_2,u_3,u_4,v_1,v_2\}\cup N_G(v_1)\cup N_G(v_2)$ is colorable thanks to \Cref{4}.
\end{itemize}

By symmetry, the same holds for $d(v_4)$.
\end{proof}

\begin{lemma}\label{3-4-vertex lemma}
Let $u$ be a $4$-vertex with a $3$-neighbor and let $u$ be incident to three $1^+$-paths $uu_iv_i$ for $1\leq i\leq 3$. If $v_1$ and $v_2$ are light vertices, then $v_3$ is a non-special $(1,1,0)$-vertex, a $(1,0,0)$-vertex, or a $4$-vertex.
\end{lemma}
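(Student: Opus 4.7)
The plan is to proceed by contradiction, in close parallel with the proof of \Cref{4-vertex lemma}, but using the subfigures (v)--(x) of \Cref{colorable figure} in place of (ii)--(iv). Suppose that $v_1$ and $v_2$ are light while $v_3$ is one of the excluded types: a $2$-vertex (so the path from $u$ via $u_3$ is a $2$-path), a $(1,1,1)$-vertex of some flavor, or a special $(1,1,0)$-vertex. For each such combination of types of $v_1, v_2, v_3$, I would define a subgraph $H \subseteq G$, color $G \setminus H$ by minimality of $G$, and apply \Cref{colorable lemma} to extend the coloring to $H$, obtaining a contradiction.

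The key observation is that $u$'s $3$-neighbor plays the role of the absent fourth path from \Cref{4-vertex lemma}. Keeping it outside $H$ yields three external distance-$2$ contributions to $u$ (the $3$-neighbor itself and its two other neighbors), so $|L(u)| \geq 6 - 3 = 3$. This matches exactly the list-size bound on the central $4$-vertex of subfigures (v)--(x), which is why those are the relevant subfigures for this lemma. The subgraph $H$ always contains $u$, the three $2$-neighbors $u_1, u_2, u_3$, and the vertices $v_1, v_2, v_3$, with further extensions depending on the types: a $2$-vertex $v_i$ requires including the endvertex of the corresponding $2$-path (matching the double-$2$-vertex extension in (viii)--(x)); a medium or large $(1,1,1)$-vertex $v_i$ requires adding $v_i$'s two other $1$-paths to $H$ (matching the double-endvertex extensions in (vi), (vii), (ix), (x)); the case of $v_3$ being a huge $(1,1,1)$-vertex is handled analogously, with the only change being the degrees of the neighbouring endvertices. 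When $v_3$ is a special $(1,1,0)$-vertex, \Cref{3-10(-3)1-111-3} guarantees that $v_3$ shares a $2$-neighbor with a $4$-vertex; this $4$-vertex is kept outside $H$, controlling the external distance-$2$ neighborhoods near $v_3$ in the right way.

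The main obstacle is the careful case-by-case verification that, after coloring $G \setminus H$ by minimality, each vertex of $H$ retains enough colors to satisfy the list-size hypothesis of the applicable subfigure. There are roughly nine subcases, one for each combination of types of $v_1, v_2, v_3$, and in each the structural lemmas \Cref{3-path lemma}--\Cref{3-10(-3)1-111-3} are invoked to bound the external distance-$2$ neighborhoods. The bookkeeping is tedious, but each individual verification is routine once the correct subfigure from \Cref{colorable figure} is identified.
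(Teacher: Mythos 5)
Your high-level strategy is the paper's: argue by contradiction, carve out a subgraph $H$ for each combination of types of $v_1,v_2,v_3$, color $G-H$ by minimality, and extend via subfigures (v)--(x) of \Cref{colorable figure}, with the $3$-neighbor of $u$ left outside $H$ accounting exactly for the central list of size $3$. However, two of your concrete prescriptions for building $H$ are wrong in the cases that make this lemma harder than \Cref{4-vertex lemma}. First, you attribute the extra pendant vertex $u_0$ of subfigures (viii)--(x) to the situation where some $v_i$ is a $2$-vertex, saying one must ``include the endvertex of the corresponding $2$-path''. That endvertex is a $4$-vertex by \Cref{2-path lemma}; it must stay \emph{outside} $H$ (none of the subfigures accommodate its three other branches, and inside $H$ it could retain no colors at all), and indeed when $v_i$ is a $2$-vertex the paper leaves that endvertex colored, which is precisely what reduces $v_i$ to a list of size $2$. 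The vertex $u_0$ encodes something else entirely: when $v_3$ is a special $(1,1,0)$-vertex or a medium/large $(1,1,1)$-vertex, it has a $2$-neighbor $v$ whose other endvertex is a $3$-vertex, and this $v$ must be brought \emph{into} $H$ as $u_0$. For a special $(1,1,0)$-vertex $v_3$, omitting $v$ leaves $v_3$ seeing five colored vertices ($v$, the small $(1,1,1)$-vertex beyond it, the $3$-neighbor $t$ of $v_3$, and $t$'s two other neighbors), i.e.\ $|L(v_3)|\geq 1$ only, which is not enough for subfigures (v)--(vii).

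Second, your treatment of the special $(1,1,0)$-vertex case does not work as stated: the $4$-vertex promised by \Cref{3-10(-3)1-111-3} is $u$ itself (the other $1$-path of $v_3$ is $v_3u_3u$), so ``keeping it outside $H$'' is vacuous and controls nothing near $v_3$; what saves this case is again the inclusion of $v$ and the switch to subfigures (viii)--(x). Finally, the huge $(1,1,1)$-vertex case is not ``handled analogously with only the degrees changed'': there the paper keeps $v_3$'s other two $1$-paths colored and merely \emph{uncolors} $v_3$ after coloring $G-(H\setminus\{v_3\})$, which both brings $|L(v_3)|$ down to the size-$2$ endpoint of subfigures (v)--(vii) and guarantees that $v_3$'s two other $2$-neighbors, which are at distance $2$ through $v_3$, receive distinct colors. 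These defects are all repairable, but as written the construction of $H$ would fail exactly where the extra subfigures (viii)--(x) were needed.
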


\begin{proof}
Suppose by contradiction that $v_1$ and $v_2$ are light vertices but $v_3$ is a $2$-vertex, a special $(1,1,0)$-vertex, or a $(1,1,1)$-vertex (see \Cref{fig:lemma20}).

We will proceed like the proof of \Cref{4-vertex lemma} by defining a certain subgraph $H$, coloring $G-H$ and extending it to $H$ by using \Cref{colorable figure}.

Let $v_3$ be a special $(1,1,0)$-vertex or a light vertex. When $v_3$ is a $3$-vertex, note that $v_3$ always has a $2$-neighbor $v$ that is adjacent to another $3$-vertex different from $v_3$. We define $N'_G(v_3)=\emptyset$ when $v_3$ is a $2$-vertex and $N'_G(v_3)=\{v\}$ when $v_3$ is a $3$-vertex.
\begin{itemize}
\item If $v_1$ and $v_2$ are $2$-vertices, then $H=\{u,u_1,u_2,u_3,v_1,v_2,v_3\}\cup N'_G(v_3)$ is colorable thanks to \Cref{8} when $N_G'(v_3)=\{v\}$ or \Cref{5} otherwise.
\item If $v_1$ is a $2$-vertex and $v_2$ is a medium or large $(1,1,1)$-vertex, then $H=\{u,u_1,u_2,u_3,v_1,v_2,v_3\}\cup N'_G(v_3)\cup N_G(v_2)$ is colorable thanks to \Cref{9} when $N_G'(v_3)=\{v\}$ or \Cref{6} otherwise.
\item If $v_1$ and $v_2$ are medium or large $(1,1,1)$-vertices, then $H=\{u,u_1,u_2,u_3,v_1,v_2,v_3\}\cup N'_G(v_3)\cup N_G(v_1)\cup N_G(v_2)$ is colorable thanks to \Cref{10} when $N_G'(v_3)=\{v\}$ or \Cref{7} otherwise.
\end{itemize}

Let $v_3$ be a huge $(1,1,1)$-vertex. In the following cases, we actually remove everything in $H$ except $v_3$, color $G-(H\setminus\{v_3\})$, then uncolor $v_3$, and we extend the coloring to $H$.
\begin{itemize}
\item If $v_1$ and $v_2$ are $2$-vertices, then $H=\{u,u_1,u_2,u_3,v_1,v_2,v_3\}$ is colorable thanks to \Cref{5}.
\item If $v_1$ is a $2$-vertex and $v_2$ is a medium or large $(1,1,1)$-vertex, then $H=\{u,u_1,u_2,u_3,v_1,v_2,v_3\}\cup N_G(v_2)$ is colorable thanks to \Cref{6}.
\item If $v_1$ and $v_2$ are medium or large $(1,1,1)$-vertices, then $H=\{u,u_1,u_2,u_3,v_1,v_2,v_3\}\cup N_G(v_1)\cup N_G(v_2)$ is colorable thanks to \Cref{7}.
\end{itemize}
\end{proof}

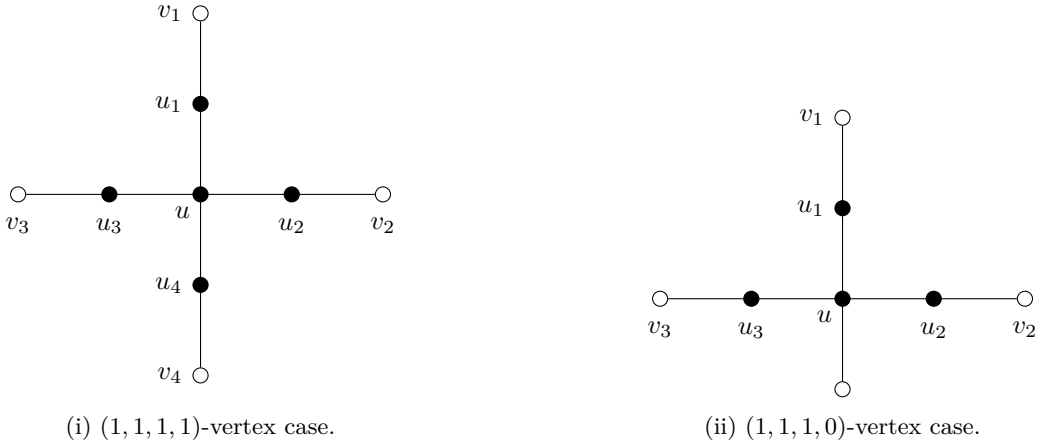
\begin{figure}[H]
\begin{subfigure}[b]{0.49\textwidth}
\centering
\begin{tikzpicture}[scale=0.6]{thick}
\begin{scope}[every node/.style={circle,draw,minimum size=1pt,inner sep=2}]
	\node[label={below:$v_3$}] (0) at (-2,0) {};
    \node[fill,label={below:$u_3$}] (1) at (0,0) {};
    \node[fill,label={below left:$u$}] (2) at (2,0) {};
    \node[fill,label={below:$u_2$}] (3) at (4,0) {};
    \node[label={below:$v_2$}] (4) at (6,0) {};
    
    \node[fill,label={left:$u_1$}] (2') at (2,2) {};
    \node[label={left:$v_1$}] (2'') at (2,4) {};
    
    \node[fill,label={left:$u_4$}] (21') at (2,-2) {};
    \node[label={left:$v_4$}] (21'') at (2,-4) {};
\end{scope}

\begin{scope}[every edge/.style={draw=black}]
    \path (0) edge (4);
    \path (21'') edge (2'');
\end{scope}
\end{tikzpicture}
\caption{\label{fig:lemma19}$(1,1,1,1)$-vertex case.}
\end{subfigure}
\begin{subfigure}[b]{0.49\textwidth}
\centering
\begin{tikzpicture}[scale=0.6]{thick}
\begin{scope}[every node/.style={circle,draw,minimum size=1pt,inner sep=2}]
	\node[label={below:$v_3$}] (0) at (-2,0) {};
    \node[fill,label={below:$u_3$}] (1) at (0,0) {};
    \node[fill,label={below left:$u$}] (2) at (2,0) {};
    \node[fill,label={below:$u_2$}] (3) at (4,0) {};
    \node[label={below:$v_2$}] (4) at (6,0) {};
    
    \node[fill,label={left:$u_1$}] (2') at (2,2) {};
    \node[label={left:$v_1$}] (2'') at (2,4) {};
    
    \node (21') at (2,-2) {};
\end{scope}

\begin{scope}[every edge/.style={draw=black}]
    \path (0) edge (4);
    \path (21') edge (2'');
\end{scope}
\end{tikzpicture}
\caption{\label{fig:lemma20}$(1,1,1,0)$-vertex case.}
\end{subfigure}
\caption{4-vertices case.}
\end{figure}

\subsection{Discharging rules \label{tonton}}

Since $\mad(G)<\frac52$, we must have 
\begin{equation}\label{equation}
\sum_{u\in V(G)} (4d(u)-10) < 0
\end{equation}

We assign to each vertex $u$ the charge $\mu(u)=4d(u)-10$. To prove the non-existence of $G$, we will redistribute the charges preserving their sum and obtaining a non-negative total charge, which will contradict \Cref{equation}.

We then apply the following discharging rules:

\begin{itemize}
\item[\ru0] Every $3^+$-vertex gives 1 to each $2$-vertex on its incident $1^+$-paths.
\item[\ru1] Every $4$-vertex gives 1 to each of its $3$-neighbors.
\item[\ru2] Let $vtu$ be a $1$-path.
\begin{itemize}
\item[(i)] If $v$ is a $(1,1^-,0)$-vertex and $u$ is a small $(1,1,1)$-vertex, then $v$ gives $\frac13$ to $u$.
\item[(ii)] If $v$ is a $4$-vertex and $u$ is a medium $(1,1,1)$-vertex, then $v$ gives 1 to $u$.
\item[(iii)] If $v$ is a $4$-vertex and $u$ is a large $(1,1,1)$-vertex, then $v$ gives $\frac12$ to $u$.
\item[(iv)] If $v$ is a $4$-vertex and $u$ is a huge $(1,1,1)$-vertex, then $v$ gives $\frac13$ to $u$.
\item[(v)] If $v$ is a $4$-vertex and $u$ is a special $(1,1,0)$-vertex, then $v$ gives $\frac13$ to $u$. 
\end{itemize}
\end{itemize}

\begin{figure}[H]
\begin{minipage}[b]{0.65\textwidth}
\centering
\begin{subfigure}[b]{0.32\textwidth}
\centering
\begin{tikzpicture}[scale=0.6]{thick}
\begin{scope}[every node/.style={circle,draw,minimum size=1pt,inner sep=2}]
    \node (1) at (0,0) {$3^+$};
    \node[fill] (2) at (2,0) {};
    \node (3) at (4,0) {$3^+$};
\end{scope}

\begin{scope}[every edge/.style={draw=black}]
    \path (1) edge (3);
    \path[->] (1) edge[bend left] node[above] {1} (2);
    \path[->] (3) edge[bend right] node[above] {1} (2);
\end{scope}
\end{tikzpicture}
\end{subfigure}
\begin{subfigure}[b]{0.32\textwidth}
\centering
\begin{tikzpicture}[scale=0.6]{thick}
\begin{scope}[every node/.style={circle,draw,minimum size=1pt,inner sep=2}]
	\node (0) at (-2,0) {$3^+$};
    \node[fill] (1) at (0,0) {};
    \node[fill] (2) at (2,0) {};
    \node (3) at (4,0) {$3^+$};
\end{scope}

\begin{scope}[every edge/.style={draw=black}]
    \path (0) edge (3);
    \path[->] (0) edge[bend left] node[above] {1} (1);
    \path[->] (0) edge[bend left] node[above] {1} (2);
    \path[->] (3) edge[bend right] node[above] {1} (1);
    \path[->] (3) edge[bend right] node[above] {1} (2);
\end{scope}
\end{tikzpicture}
\end{subfigure}
\caption{\ru0.}
\end{minipage}
\begin{minipage}[b]{0.32\textwidth}
\centering
\begin{tikzpicture}[scale=0.6]{thick}
\begin{scope}[every node/.style={circle,draw,minimum size=1pt,inner sep=2}]
    \node (1) at (0,0) {$4$};
    \node (3) at (4,0) {$3$};
\end{scope}

\begin{scope}[every edge/.style={draw=black}]
    \path (1) edge (3);
    \path[->] (1) edge[bend left] node[above] {1} (3);
\end{scope}
\end{tikzpicture}
\caption{\ru1.}
\end{minipage}
\end{figure}

\begin{figure}[H]
\begin{subfigure}[b]{0.32\textwidth}
\centering
\begin{tikzpicture}[scale=0.6]{thick}
\begin{scope}[every node/.style={circle,draw,minimum size=1pt,inner sep=2}]
	\node[label={below:$v_1$}] (0) at (-2,0) {3};
    \node[fill,label={below:$t_1$}] (1) at (0,0) {};
    \node[fill,label={below:$u$}] (2) at (2,0) {};
    \node[fill,label={below:$t_2$}] (3) at (4,0) {};
    \node[label={below:$v_2$}] (4) at (6,0) {3};
    
    \node[fill,label={left:$t_3$}] (2') at (2,2) {};
    \node[label={above:$v_3$}] (2'') at (2,4) {3};
\end{scope}

\begin{scope}[every edge/.style={draw=black}]
    \path (0) edge (4);
    \path (2) edge (2'');
    \path[->] (0) edge[bend left] node[above] {$\frac13$} (2);
    \path[->] (4) edge[bend right] node[above] {$\frac13$} (2);
    \path[->] (2'') edge[bend left] node[right] {$\frac13$} (2);
\end{scope}
\end{tikzpicture}
\caption{Small $(1,1,1)$-vertex case: \\for $1\leq i\leq 3$, $v_i\neq (1,1,1)$-vertex. }
\end{subfigure}
\begin{subfigure}[b]{0.32\textwidth}
\centering
\begin{tikzpicture}[scale=0.6]{thick}
\begin{scope}[every node/.style={circle,draw,minimum size=1pt,inner sep=2}]
	\node (0) at (-2,0) {3};
    \node[fill] (1) at (0,0) {};
    \node[fill,label={below:$u$}] (2) at (2,0) {};
    \node[fill] (3) at (4,0) {};
    \node (4) at (6,0) {3};
    
    \node[fill,label={left:$t$}] (2') at (2,2) {};
    \node[label={above:$v$}] (2'') at (2,4) {4};
\end{scope}

\begin{scope}[every edge/.style={draw=black}]
    \path (0) edge (4);
    \path (2) edge (2'');
    \path[->] (2'') edge[bend left] node[right] {1} (2);
\end{scope}
\end{tikzpicture}
\caption{Medium $(1,1,1)$-vertex case.}
\end{subfigure}
\begin{subfigure}[b]{0.32\textwidth}
\centering
\begin{tikzpicture}[scale=0.6]{thick}
\begin{scope}[every node/.style={circle,draw,minimum size=1pt,inner sep=2}]
\node[label={below:$v_1$}] (0) at (-2,0) {4};
    \node[fill,label={below:$t_1$}] (1) at (0,0) {};
    \node[fill,label={below:$u$}] (2) at (2,0) {};
    \node[fill,label={below:$t_2$}] (3) at (4,0) {};
    \node[label={below:$v_2$}] (4) at (6,0) {4};
    
    \node[fill] (2') at (2,2) {};
    \node (2'') at (2,4) {3};
\end{scope}

\begin{scope}[every edge/.style={draw=black}]
    \path (0) edge (4);
    \path (2) edge (2'');
    \path[->] (0) edge[bend left] node[above] {$\frac12$} (2);
    \path[->] (4) edge[bend right] node[above] {$\frac12$} (2);
\end{scope}
\end{tikzpicture}
\caption{Large $(1,1,1)$-vertex case.}
\end{subfigure}
\begin{subfigure}[b]{0.32\textwidth}
\centering
\begin{tikzpicture}[scale=0.6]{thick}
\begin{scope}[every node/.style={circle,draw,minimum size=1pt,inner sep=2}]
	\node[label={below:$v_1$}] (0) at (-2,0) {4};
    \node[fill,label={below:$t_1$}] (1) at (0,0) {};
    \node[fill,label={below:$u$}] (2) at (2,0) {};
    \node[fill,label={below:$t_2$}] (3) at (4,0) {};
    \node[label={below:$v_2$}] (4) at (6,0) {4};
    
    \node[fill,label={left:$t_3$}] (2') at (2,2) {};
    \node[label={above:$v_3$}] (2'') at (2,4) {4};
\end{scope}

\begin{scope}[every edge/.style={draw=black}]
    \path (0) edge (4);
    \path (2) edge (2'');
    \path[->] (0) edge[bend left] node[above] {$\frac13$} (2);
    \path[->] (4) edge[bend right] node[above] {$\frac13$} (2);
    \path[->] (2'') edge[bend left] node[right] {$\frac13$} (2);
\end{scope}
\end{tikzpicture}
\caption{Huge $(1,1,1)$-vertex case.}
\end{subfigure}
\begin{subfigure}[b]{0.65\textwidth}
\centering
\begin{tikzpicture}[scale=0.6]{thick}
\begin{scope}[every node/.style={circle,draw,minimum size=1pt,inner sep=2}]
	\node[label={below:$v$}] (0) at (-2,0) {4};
    \node[fill,label={below:$t$}] (1) at (0,0) {};
    \node[fill,label={below:$u$}] (2) at (2,0) {};
    \node[fill] (3) at (4,0) {};
	\node[fill] (5) at (6,0) {};
	\node[fill] (6) at (8,0) {};
	\node (7) at (10,0) {3};    
    
    \node (2') at (2,2) {3};
    
    \node[fill] (5') at (6,2) {};
    \node (5'') at (6,4) {3};
\end{scope}

\begin{scope}[every edge/.style={draw=black}]
    \path (0) edge (7);
    \path (2) edge (2');
    \path (5) edge (5'');
    \path[->] (0) edge[bend left] node[above] {$\frac13$} (2);
\end{scope}
\end{tikzpicture}
\caption{Special $(1,1,0)$-vertex case.}
\end{subfigure}
\caption{\ru2.}
\end{figure}

\subsection{Verifying that charges on each vertex are non-negative} \label{verification}

Let $\mu^*$ be the assigned charges after the discharging procedure. In what follows, we will prove that: $$\forall u \in V(G), \mu^*(v)\geq 0.$$

Let $u$ be a vertex in $V(G)$.\\ 
\textbf{Case 1:} If $d(u) = 2$, then $u$ receives charge 1 from each endvertex of the path it lies on by \ru0. Thus, we get: 
$$\mu^*(u) = \mu(u) + 2\cdot 1 = 4\cdot 2 -10 + 2 = 0.$$

\textbf{Case 2:} If $d(u)=3$, then recall that $\mu(u)=4\cdot 3 - 10 = 2$. Moreover, $u$ cannot be incident to any $2^+$-paths due to \Cref{2-path lemma,3-path lemma}. Now, we distinguish the following cases:
\begin{itemize}
\item If $u$ is a $(1,1,1)$-vertex, then $u$ only gives charge to its $2$-neighbors, more precisely 1 to each of its $2$-neighbors by \ru0. Let $v$, $w$, and $x$ be the other endvertices of the $1$-paths incident to $u$.

If $u$ is a small $(1,1,1)$-vertex, then observe that $v$, $w$, and $x$ are all $(1,1^-,0)$-vertices as they cannot be $(1,1,1)$-vertices due to \Cref{3-111-111}. As a result, $u$ receives $\frac13$ from each of $v$, $w$, and $x$ by \ru2(i). Hence,
$$ \mu^*(u)=2-3\cdot 1 + 3\cdot \frac13 = 0.$$

If $u$ is a medium $(1,1,1)$-vertex, then $u$ receives 1 from one of $v$, $w$, and $x$ by \ru2(ii). Hence,
$$ \mu^*(u)=2-3\cdot 1 + 1 = 0.$$

If $u$ is a large $(1,1,1)$-vertex, then $u$ receives $\frac12$ twice from $v$, $w$, and $x$ by \ru2(iii). Hence,
$$ \mu^*(u)=2-3\cdot 1 + 2\cdot \frac12 = 0.$$

If $u$ is a huge $(1,1,1)$-vertex, then $u$ receives $\frac13$ from each of $v$, $w$, and $x$ by \ru2(iv). Hence,
$$ \mu^*(u)=2-3\cdot 1 + 3\cdot \frac13 = 0.$$

\item If $u$ is a $(1,1,0)$-vertex, then $u$ gives 1 to each of its $2$-neighbors by \ru0. Let $t$ be its $3^+$-neighbor and let $v$ and $w$ be the other endvertices of the $1$-paths incident to $u$. First, observe that if neither $v$ nor $w$ is a small $(1,1,1)$-vertex, then we would have
$$ \mu^*(u)\geq 2-2\cdot 1 = 0.$$
So, say $v$ is a small $(1,1,1)$-vertex, in which case, $u$ gives $\frac13$ to $v$ by \ru2(i).

If $d(t)=4$, then $u$ receives 1 from $t$ by \ru1. Moreover, at worst, $u$ also gives $\frac13$ to $w$ by \ru2(i). To sum up,
$$ \mu^*(u)\geq 2 - 2\cdot 1 + 1 - 2\cdot \frac13 = \frac13.$$

If $d(t)=3$, then $w$ must be a $4$-vertex by \Cref{3-10(-3)1-111-3}. In other words, $u$ is a special $(1,1,0)$-vertex. Thus, $u$ receives $\frac13$ from $w$ by \ru3. To sum up,
$$ \mu^*(u)\geq 2 - 2\cdot 1 -\frac13 + \frac13 = 0.$$

\item If $u$ is a $(1,0,0)$-vertex, then at worst, $u$ gives 1 to its 2-neighbor by \ru0 and $\frac13$ to the other endvertex of its incident 1-path by \ru2(i). Thus,
$$ \mu^*(u)\geq 2 - 1 - \frac13 = \frac23.$$

\item If $u$ is a $(0,0,0)$-vertex, then none of the discharging rules apply. Thus,
$$ \mu^*(u) = \mu(u) = 2.$$
\end{itemize}

\textbf{Case 3: } If $d(u)=4$, then recall that $\mu(u) = 4\cdot 4 - 10 = 6$. Observe that $u$ gives away at most 2 per incident $0^+$-path. Indeed, there are no $3^+$-paths by \Cref{3-path lemma}. More precisely, $u$ gives:
\begin{itemize}
\item[2] to a $2$-path by \ru0.
\item[2] to a $1$-path with a medium $(1,1,1)$-endvertex by \ru0 and \ru2(ii).
\item[$\frac32$] to a $1$-path with a large $(1,1,1)$-endvertex by \ru0 and \ru2(iii).
\item[$\frac43$] to a $1$-path with a huge $(1,1,1)$-endvertex by \ru0 and \ru2(iv).
\item[$\frac43$] to a $1$-path with a special $(1,1,0)$-endvertex by \ru0 and \ru2(v).
\item[1] to a $1$-path with a non-special $(1,1,0)$-endvertex, a $(1,0,0)$-endvertex or a $4$-endvertex by \ru0.
\item[1] to a $3$-neighbor by \ru1.
\end{itemize}

Observe that $u$ only gives more than $\frac43$ to an incident path when the neighbor at distance $2$ on that path is a light vertex (\Cref{light}). Now, we distinguish the following cases:

\begin{itemize}
\item If $u$ is a $(1^+,1^+,1^+,1^+)$-vertex, then let $uu_iv_i$ be the $1^+$-paths incident to $u$ for $1\leq i\leq 4$.

If at most one of the $v_i$'s is a light vertex, then at worst
$$ \mu^*(u)\geq 6 - 2 - 3\cdot\frac43 = 0.$$

If at least two of the $v_i$'s are light vertices, say $v_1$ and $v_2$, then $d(v_3)\geq 4$ and $d(v_4)\geq 4$ due to \Cref{4-vertex lemma}. As a result, $u$ gives only 1 to each of $uu_3v_3$ and $uu_4v_4$. Thus, at worst we get
$$ \mu^*(u)\geq 6 - 2\cdot 2 - 2\cdot 1 = 0.$$

\item If $u$ is a $(1^+,1^+,1^+,0)$-vertex with a $4$-neighbor, then $u$ does not give anything to its $4$-neighbor. Thus, at worst we have
$$ \mu^*(u)\geq 6 - 3\cdot 2 = 0.$$

\item If $u$ is a $(1^+,1^+,1^+,0)$-vertex with a $3$-neighbor, then let $uu_iv_i$ be the $1^+$-paths incident to $u$ for $1\leq i\leq 3$. We know that $u$ always give 1 to its $3$-neighbor.

If at most one of the $v_i$'s is a light vertex, then at worst
$$ \mu^*(u)\geq 6 - 1 - 2 - 2\cdot\frac43 = \frac13.$$

If at least two of the $v_i$'s are light vertices, say $v_1$ and $v_2$, then $v_3$ must be a non-special $(1,1,0)$-endvertex, a $(1,0,0)$-endvertex or a $4$-endvertex due to \Cref{3-4-vertex lemma}. As a result, $u$ gives only 1 to $uu_3v_3$. Thus, at worst we get
$$ \mu^*(u)\geq 6 - 1 - 2\cdot 2 - 1 = 0.$$

\item If $u$ is a $(0^+,0^+,0,0)$-vertex, then at worst we have
$$ \mu^*(u)\geq 6 - 2\cdot 2 -2\cdot 1 = 0.$$
\end{itemize}

To conclude, we started with a charge assignment with a negative total sum, but after the discharging procedure, which preserved that sum, we end up with a non-negative one, which is a contradiction. In other words, there exists no counter-example to \Cref{main theorem}. 

\subsection*{Remarks}

Graphs of with $mad<\frac52$ contains all planar graphs with girth at least 10. The condition on the girth in \Cref{main theorem} only serves to simplify the proof of the reducibility of certain configurations as it guarantees that some vertices must be distinct. Here, we chose girth 10 as it would coincide with the girth of the subclass of planar graphs, but girth 9 suffices to guarantee the desired property.  One can also strengthen \Cref{main theorem} by removing this condition and study the cases where some vertices in our configurations coincide.

\section*{Acknowledgements}
This work was partially supported by the grant HOSIGRA funded by the French National Research
Agency (ANR, Agence Nationale de la Recherche) under the contract number ANR-17-CE40-0022.

\bibliographystyle{plain}

\end{document}